\date{June 4, 2019}
\newtheorem{dummy}{anything}[section]
\newtheorem{theorem}[dummy]{Theorem}
\newtheorem*{thma}{Theorem A}
\newtheorem*{thmb}{Theorem B}
\newtheorem{lemma}[dummy]{Lemma}
\newtheorem{proposition}[dummy]{Proposition}
\newtheorem{corollary}[dummy]{Corollary}
\newtheorem{conjecture}[dummy]{Conjecture}
\theoremstyle{definition}
\newtheorem{definition}[dummy]{Definition}
  \newtheorem{example}[dummy]{Example}
  \newtheorem{remark}[dummy]{Remark}
   \newtheorem*{Remark}{Remark}
    \newtheorem*{question}{Question}
  \newtheorem*{acknowledgement}{Acknowledgement}
\newcommand
{\eqncount}{\setcounter{equation}{\value{dummy}}%
\addtocounter{dummy}{1}}
\newcommand{\cE}{\mathcal E}
\newcommand{\bZ}{\mathbb Z}
\newcommand{\bP}{\mathbf P}
\newcommand{\RP}{\mathbb R\bP}
\newcommand{\bS}{\mathbb S}
\newcommand{\wH}{\widehat H}
\DeclareMathOperator{\Hom}{Hom}
 \DeclareMathOperator{\Ext}{Ext}
\DeclareMathOperator{\Tor }{Tor}
\DeclareMathOperator{\Res}{Res}
\DeclareMathOperator{\coker}{coker}
 \DeclareMathOperator{\Syl}{Syl}
\newcommand{\cy}[1]{\bZ/{#1}\bZ}
\newcommand{\la}{\langle}
\newcommand{\ra}{\rangle}
\def\bZp{\bZ_{(p)}}
\newcommand{\mmatrix}[4]{\bigg (\hskip-4pt\vcenter
{\xymatrix@C-2pc@R-2pc{#1&#2\\#3&#4} }\hskip-2pt
\bigg )}
\newcommand{\ZG}{\bZ G}
\begin{document}

\title[Free Finite Group Actions on Rational Homology 3--Spheres]
{Free Finite Group Actions on Rational Homology 3--Spheres}

\author{Alejandro Adem}
\address{\vbox{\hbox{Department of Mathematics, The University of British Columbia} \hbox{Vancouver, British Columbia, Canada}}}
\email{adem@math.ubc.ca}

\author{Ian Hambleton}
\address{\vbox{\hbox{Department of Mathematics \& Statistics, McMaster University}\hbox{Hamilton, Ontario L8S 4K1, Canada}}}

\email{hambleton@mcmaster.ca }

\thanks{Both researchers were partially supported by NSERC Discovery Grants}

\begin{abstract}
We use methods from the cohomology of groups to describe the finite groups which can act freely and 
homologically trivially on closed 3--manifolds which are rational homology spheres. 
\end{abstract}

\maketitle

\section{Introduction}
Cooper and Long \cite{Cooper:2000} have shown that every finite group can act
freely and smoothly on some closed, oriented $3$-manifold
$M$ with the rational homology of the $3$-sphere (for brevity
we shall call such an object \textsl{a rational homology 3-sphere}). 
However, under the natural condition that the action must induce the identity 
on the integral homology of $M$,  new group theoretic restrictions arise. 
In this article, we apply group cohomology to establish necessary conditions 
for such \emph{homologically trivial} actions and use this information to construct 
some new examples.  

If a finite group $G$ acts freely on an integral homology $3$-sphere, then $G$ must have periodic cohomology of period two or four. Those which act freely on the standard $3$-sphere are the finite fundamental groups of closed 
$3$-manifolds. A basic example of a free action on a rational homology sphere is given by
the free action of $\cy{2} \times \cy{2}$ on the real projective space $\RP^3$, which arises from the free action
of the  quaternion group $Q(8)$ on the 3--sphere.
This can be seen by considering the central extension 
$$1\to \cy{2} \to Q(8) \to \cy{2} \times\cy{2}\to 1. $$ 
We will show that all finite groups acting freely
and homologically trivially on rational homology spheres can be described in this way. 

If $\pi$ denotes a 
finite set of prime numbers and $A$ is a finite abelian group, we let $A_{(\pi)} =: \bigoplus_{p\in\pi} A_p$, where
$A_p$ is the $p$--primary subgroup of $A$. Our main result is 
\begin{thma}
Let $G$ denote a finite group acting freely and homologically trivially
on a rational homology 3--sphere $M$. Let $\pi $ denote the set
of precisely those primes which divide both $|G|$ and $|H_1(M;\bZ)|$.  
Then there exists an extension
$$1\to H_1(M;\bZ)_{(\pi)} \to Q_{\pi} \to G\to 1$$
where $ H_1(M;\bZ)_{(\pi)}$ is a central, cyclic subgroup and $Q_{\pi}$ is a finite
group with periodic cohomology of period two or four.
\end{thma}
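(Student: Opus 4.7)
\emph{Proof plan.} The strategy is to construct $Q_\pi$ as a natural finite quotient of $\varGamma := \pi_1(M/G)$, derive centrality from the homological-triviality hypothesis, and establish periodic cohomology via a prime-by-prime Serre spectral sequence argument; the cyclicity of $H$ will then be a formal consequence.

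\emph{Construction.} The free action exhibits $M \to M/G$ as a regular covering, yielding
\begin{equation*}
1 \to \pi_1(M) \to \varGamma \to G \to 1.
\end{equation*}
Denote by $\pi'$ the complementary set of primes, and write $H_1(M;\bZ) = H_1(M;\bZ)_{(\pi)} \oplus H_1(M;\bZ)_{(\pi')}$. Let $A \subseteq \pi_1(M)$ be the preimage of $H_1(M;\bZ)_{(\pi')}$ under the abelianization, so $\pi_1(M)/A \cong H_1(M;\bZ)_{(\pi)}$. Since $A$ is characteristic in $\pi_1(M)$ it is normal in $\varGamma$, and I set $Q_\pi := \varGamma/A$. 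This is a finite group of order $|G|\cdot|H_1(M;\bZ)_{(\pi)}|$ fitting into
\begin{equation*}
1 \to H_1(M;\bZ)_{(\pi)} \to Q_\pi \to G \to 1.
\end{equation*}
The conjugation action of $G$ on the kernel agrees with the monodromy of $M \to M/G$ on $H_1(M;\bZ)$, which is trivial by assumption; hence the extension is central.

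\emph{Periodicity.} To show $Q_\pi$ has cohomology of period dividing $4$, it suffices to verify this on each Sylow subgroup. The primes dividing $|Q_\pi|$ are exactly those dividing $|G|$. For $p\mid |G|$ with $p\nmid\pi$, we have $H_1(M;\bZ)_{(p)} = 0$, so the Sylow $p$-subgroup of $Q_\pi$ is isomorphic to that of $G$, and $M$ is a $p$-local integral homology $3$-sphere. The Serre spectral sequence of the free action of this Sylow subgroup on $M$ then forces the $d_4$-transgression of the fundamental class $[M]$ to be a periodicity generator in $H^4$. For $p\mid\pi$, let $N$ be the regular cover of $M/G$ with $\pi_1(N)=A$; then $N$ is a closed $3$-manifold carrying a free $Q_\pi$-action with $N/Q_\pi = M/G$, and fits into a tower $N\to M\to M/G$ with deck groups $H_1(M;\bZ)_{(\pi)}$ and $G$. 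Using the Cartan--Leray spectral sequence of $N\to M$, together with the fact that $N$ is designed to kill precisely the $p$-primary part of $H_1(M;\bZ)$, one controls $H^*(N;\bZ_{(p)})$ sufficiently to extract a periodicity class in $H^4(Q_\pi;\bZ_{(p)})$ from the Serre spectral sequence of $N\to M/G\to BQ_\pi$, whose abutment vanishes above degree $3$.

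\emph{Cyclicity of $H$ and main obstacle.} Once $Q_\pi$ has periodic cohomology, the classical theorem that every abelian subgroup of such a group is cyclic immediately forces $H \cong H_1(M;\bZ)_{(\pi)}$ to be cyclic; the period is $2$ when $Q_\pi$ is itself cyclic and $4$ otherwise. The main obstacle is the cohomological analysis of $N$ at primes $p\mid\pi$, because $\pi_1(M)$ is generally infinite and $N$ is usually not a rational homology $3$-sphere integrally. Controlling the Cartan--Leray spectral sequence there, and disentangling the contribution of the kernel $H$ to the transgression, is precisely where the defining condition on $\pi$ enters in an essential way.
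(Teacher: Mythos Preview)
Your construction of $Q_\pi$ and the centrality argument match the paper exactly, and your treatment of primes $p\nmid\pi$ is essentially the paper's Corollary~\ref{exponents} rephrased via the Serre spectral sequence. The substantive divergence is at primes $p\mid\pi$, and there your proposal has a genuine gap.

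The auxiliary cover $N$ with $\pi_1(N)=A$ is a closed $3$-manifold, but you have no control over $H_1(N;\bZ_{(p)})$: the five-term sequence for $N\to M$ gives only that the $D$-coinvariants of $H_1(N;\bZ_{(p)})$ are a quotient of $H_2(D;\bZ_{(p)})$, where $D=H_1(M;\bZ)_{(\pi)}$. To make this vanish you would already need $D$ to be cyclic, which is what you are trying to deduce; and even then, trivial coinvariants do not force $H_1(N;\bZ_{(p)})=0$, since the $D$-action on $H_1(N)$ need not be trivial (homological triviality was assumed for $G$ on $M$, not for $D$ on $N$). Your ``main obstacle'' paragraph is an honest acknowledgement that this step is not done. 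The paper avoids $N$ entirely. It stays with the $G$-action on $M$ and uses the long exact Tate sequence of Corollary~\ref{long-exact-sequence} twice: first (Lemma~\ref{cyclic}) to prove \emph{directly} that $H_1(M;\bZ)_{(p)}$ is cyclic for every $p\mid|G|$, by reading off that $H^4(\bZ/p;H_1(M;\bZ))$ is cyclic of order at most $p$; then to show that the extension class $\cE_M\in H^2(G;H_1(M;\bZ))$ restricts nontrivially to every $\bZ/p\leq G$ with $p\mid\pi$, so the restricted extensions are non-split. From this one sees that every element of order $p$ in $Q_\pi$ already lies in the cyclic central kernel $H$, hence $Q_\pi$ has no $(\bZ/p)^2$. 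Note the reversal of your logic: the paper proves cyclicity of $H$ \emph{before} periodicity and uses it as input, rather than deducing it afterwards.

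Finally, your assertion that the period is ``$2$ when $Q_\pi$ is cyclic and $4$ otherwise'' is not a proof. The paper devotes a separate proposition to this, invoking Swan's determination of $p$-periods via $\Phi_p(Q)\cong N_Q(\mathrm{Syl}_p Q)/C_Q(\mathrm{Syl}_p Q)$ and using the sequence~\eqref{eqn:plocal} together with the universal coefficient theorem to bound the odd $p$-periods of $G$ (and hence of $Q_\pi$) by four.
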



\smallskip
To investigate the corresponding existence  problem we raise the following question:

\begin{question} Let $G$ be a finite  group with periodic cohomology of period four. 
Does $G$ act freely and homologically trivially
on some rational homology 3--sphere~?
\end{question}

A complete list of such groups is given in Milnor \cite[\S 3]{milnor2}, and those which can act freely and orthogonally on $\bS^3$ were listed by Hopf  \cite{Hopf:1926}. Perelman \cite{Lott:2007} showed that the remaining groups in Milnor's list do not arise as the fundamental group of any closed, oriented $3$-manifold (some families were earlier eliminated by Ronnie Lee \cite{lee1}). For those with quaternion $2$-Sylow subgroups $Q(2^k)$, $k \geq 4$, we have a non-existence result in our setting.

 \begin{thmb} Let  $G$ be a finite group of period four which acts freely and homologically trivially on a rational homology $3$-sphere. Then  either  (i) $G$ is a quotient of a finite $3$-manifold group by a central cyclic subgroup, or (ii) the $2$-Sylow subgroup of $G$ is quaternion of order 8.
\end{thmb}
\begin{Remark}
In Proposition \ref{prop:fourthree} we show that the quotient examples in part (i) include periodic groups with non-central elements of order two (which is impossible for groups acting freely on any sphere \cite[Corollary 1]{milnor2}). Proposition \ref{prop:four.7} establishes part (ii) by ruling out the groups with larger quaternion $2$-Sylow subgroups in Milnor's list. Moreover our result rules out all quotients of these groups by central cyclic subgroups.
\end{Remark}

 Among the groups in Milnor's list, the groups $Q(8n,k,l)$   with $2$-Sylow subgroup $Q(8)$ (i.e.~those with $n$ odd) have been much studied, and it is known that some (but not all) can act freely on integral homology $3$-spheres (see Madsen \cite{madsen2}). This work gives some new examples of existence in the setting of Theorem A via quotients by the action of central cyclic subgroups (see Proposition \ref{prop:typeA}).  The results of Pardon \cite{Pardon:1980} provided free actions of period four groups on rational homology $3$-spheres with some control on the torsion, but did not address the homological triviality requirement (see Proposition \ref{prop:pardon}). 
More information about the actions of the groups $Q(8n,k,l)$ is given in Theorem \ref{thm:existence}.

This article is organized as follows: in \S2 we apply methods from group cohomology to 
actions on rational homology spheres; in \S 3 we consider the restrictions arising in the homologically trivial case; in \S 4 we discuss the existence of homologically trivial actions,
and finally \S 5 deals with applications of our approach  to finite 
quotients of more general fundamental groups of closed 3--manifolds. 

\begin{acknowledgement}
We are grateful to the referees for their very helpful
comments.
\end{acknowledgement}

\section{Application of cohomological methods}
Let $G$ denote a finite group acting freely, smoothly, and preserving orientation
on a closed 3--manifold $M$
that is a rational homology sphere. In dimension three, free actions of finite groups by homeomorphisms
are equivalent to smooth actions, and the quotient manifolds are homotopy equivalent to finite CW complexes  (see \cite{Cairns:1961}).

We denote by $\Omega^r(\bZ )$ the $\bZ  G$ module uniquely defined in the stable category (where $\ZG$-modules are identified up to
stabilization  by projectives) as the $r$--fold dimension--shift of the trivial module $\bZ$.
Note the isomorphism of $\bZ G$--modules $H_1(M;\bZ)\cong H^2(M;\bZ)$;
we may use either version depending on the context. We refer to \cite{Adem:2004}
and \cite{Brown:1982}
for background on group cohomology,  and to \cite[Chap.~2, \S\S 5-6]{Carlson:2003} for a quick introduction to the stable category. 

\begin{proposition}
If a finite group $G$ acts freely 
on a rational homology 3--sphere $M$,
then there is a short exact sequence of $\bZ G$--modules in the
stable category of $\bZ G$--modules of the form 
\eqncount
\begin{equation}\label{eq:extclass}
0\to \Omega^{-2}(\bZ)\to \Omega^2(\bZ)\to H_1(M;\bZ)\to 0 
\end{equation}
\end{proposition}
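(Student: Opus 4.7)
The plan is to extract the extension directly from the cellular chain complex of $M$. Since $G$ acts freely and preserves orientation on $M$, and smooth three--dimensional actions are equivariantly triangulable, one may choose a finite $G$--CW structure, and then $C_*:=C_*(M;\bZ)$ is a complex of finitely generated free $\bZ G$--modules concentrated in degrees $0,1,2,3$. The hypothesis that $M$ is a rational homology 3--sphere, combined with orientation--preservation and Poincar\'e duality, fixes the homology: $H_0(M;\bZ)=\bZ$ and $H_3(M;\bZ)=\bZ$, both as trivial $\bZ G$--modules, while $H_2(M;\bZ)\cong H^1(M;\bZ)\cong \Hom(H_1(M;\bZ),\bZ)=0$ since $H_1(M;\bZ)$ is finite.

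Writing $Z_i=\ker(\partial_i)$ and $B_i=\Image(\partial_{i+1})$, I would then split $C_*$ into the standard short exact sequences
\begin{align*}
& 0\to B_0\to C_0\to \bZ\to 0, \quad 0\to Z_1\to C_1\to B_0\to 0, \\
& 0\to B_1\to Z_1\to H_1(M;\bZ)\to 0, \\
& 0\to B_2\to C_2\to B_1\to 0, \quad 0\to \bZ\to C_3\to B_2\to 0,
\end{align*}
using $Z_2=B_2$ (because $H_2(M;\bZ)=0$) and $Z_3=\bZ$ (because $H_3(M;\bZ)=\bZ$ with trivial action). Because each $C_i$ is free (and therefore projective), the first two sequences collapse in the stable category to $Z_1\simeq\Omega^2(\bZ)$, while the last two give $B_2\simeq\Omega^{-1}(\bZ)$ and hence $B_1\simeq\Omega^{-2}(\bZ)$. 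The middle sequence is then precisely the desired extension
$$0\to \Omega^{-2}(\bZ)\to \Omega^2(\bZ)\to H_1(M;\bZ)\to 0.$$

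The argument is essentially bookkeeping with syzygies, so no serious obstacle arises. The two points requiring care are the vanishing $H_2(M;\bZ)=0$, which uses Poincar\'e duality together with the finiteness of $H_1(M;\bZ)$, and the identification of the top and bottom homology as the trivial $\bZ G$--module, which uses the orientation--preserving hypothesis on the action. With these in hand, the extension class of the middle sequence is exactly the invariant that Theorem A will later exploit.
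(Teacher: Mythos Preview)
Your proposal is correct and follows essentially the same approach as the paper: split the cellular chain complex of the free $G$--CW complex $M$ into short exact sequences, identify $Z_1\simeq\Omega^2(\bZ)$ and $B_1\simeq\Omega^{-2}(\bZ)$ stably, and read off the desired extension from $0\to B_1\to Z_1\to H_1(M;\bZ)\to 0$. The only difference is cosmetic---the paper splices your five short exact sequences into two four-term sequences and one three-term sequence---and you supply a bit more justification (the vanishing of $H_2$ via Poincar\'e duality, and the triviality of the $G$-action on $H_0$ and $H_3$ via orientation-preservation) that the paper leaves implicit.
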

\begin{proof} We will assume that $M$ is a $G$--CW complex with cellular
chains $C_*(M)$. Then we have exact sequences of $\bZ G$--modules

$$0\to \bZ\to C_3(M)\to C_2(M) \to B_1\to 0$$
$$0\to Z_1\to C_1(M)\to C_0(M)\to \bZ\to 0$$
$$0\to B_1\to Z_1\to H_1(M;\bZ)\to 0$$
where $B_1$ denotes the module of boundaries and $Z_1$ the module of cycles respectively.
The result follows from (stably) identifying $Z_1$ with $\Omega^2(\bZ)$ and $B_1$ with $\Omega^{-2}(\bZ)$
respectively.
\end{proof}

Note that the stable map $\Omega^{-2}(\bZ)\to \Omega^2(\bZ)$ defines an element
$$\sigma \in \underline{\rm Hom}_{\bZ G}(\Omega^{-2}(\bZ), \Omega^2(\bZ))\cong 
\underline{\rm Hom}_{\bZ G}(\bZ, \Omega^4(\bZ))\cong \wH^{-4}(G,\bZ)$$
This class appears when applying Tate cohomology to \eqref{eq:extclass}.

\begin{corollary}\label{long-exact-sequence}
The short exact sequence \eqref{eq:extclass} yields a long exact sequence in Tate cohomology
\[
\dots\to\wH^{i+2}(G,\bZ)\xrightarrow{\cup\,\sigma} \wH^{i-2}(G,\bZ)\to \wH^i(G,H_1(M;\bZ))\to \wH^{i+3}(G,\bZ)
\to\dots
\]
determined by the class $\sigma\in \wH^{-4}(G,\bZ)$ which is
the image of $1\in \wH^{0}(G,\bZ)\cong \cy{|G|}$.
\end{corollary}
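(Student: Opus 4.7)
The plan is to apply Tate cohomology to \eqref{eq:extclass} termwise and then use dimension-shifting to rewrite the $\Omega^{\pm 2}(\bZ)$ terms as shifted Tate cohomology of $\bZ$. Since the sequence \eqref{eq:extclass} is represented by the honest short exact sequence $0 \to B_1 \to Z_1 \to H_1(M;\bZ) \to 0$ from the proof of the proposition, the usual machinery yields a long exact sequence in Tate cohomology. Applying the standard dimension-shifting isomorphism $\wH^i(G, \Omega^r(\bZ)) \cong \wH^{i-r}(G, \bZ)$ (see e.g.\ Brown, VI.5) converts $\wH^i(G, \Omega^{-2}(\bZ))$ into $\wH^{i+2}(G, \bZ)$, converts $\wH^i(G, \Omega^2(\bZ))$ into $\wH^{i-2}(G, \bZ)$, and shifts the next connecting target to $\wH^{i+3}(G, \bZ)$, producing the displayed LES.

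The key technical step is to identify the first arrow $\wH^{i+2}(G, \bZ) \to \wH^{i-2}(G, \bZ)$ with cup product by $\sigma$. By construction this arrow is induced functorially by the stable map $\sigma\colon \Omega^{-2}(\bZ) \to \Omega^2(\bZ)$, which represents a class in $\underline{\Hom}_{\bZ G}(\Omega^{-2}(\bZ), \Omega^2(\bZ)) \cong \wH^{-4}(G, \bZ)$ as noted in the paragraph preceding the corollary. The standard compatibility between the stable-Hom pairing and cup product in Tate cohomology says that, under dimension shifting, the functorial action of such a stable morphism on $\wH^*(G, \bZ)$ is precisely cup product with the associated class; I would verify this (if needed) via a diagram chase using a complete $\bZ G$-resolution of $\bZ$, matching Yoneda composition with the cup product.

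The final clause then follows immediately by specializing to $i = -2$: the first arrow becomes $\wH^0(G, \bZ) \xrightarrow{\cup \sigma} \wH^{-4}(G, \bZ)$, which sends the generator $1 \in \bZ/|G|$ to $1 \cup \sigma = \sigma$. The main obstacle in this plan is the cup-product identification in the second paragraph, which, while entirely standard, requires care with sign and shift conventions in passing between the stable category and Tate cohomology.
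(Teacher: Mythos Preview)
Your proposal is correct and matches the paper's approach exactly: the paper gives no separate proof for this corollary, merely remarking beforehand that the class $\sigma\in\wH^{-4}(G,\bZ)$ ``appears when applying Tate cohomology to \eqref{eq:extclass}'', which is precisely the dimension-shifting argument you have spelled out. Your added care in checking that the induced map is cup product with $\sigma$ (via the Yoneda/stable-Hom identification) and in reading off the ``image of the generator'' description at $i=-2$ simply makes explicit what the paper leaves implicit.
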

Next we identify the class $\sigma$ geometrically.
\begin{proposition}\label{prop:fundclass}
If $[M/G]\in H_3(M/G,\bZ)$ denotes
the fundamental class of the quotient manifold, then $\sigma$ 
is the image of $c_*[M/G]\in H_3(BG,\bZ)$, under the natural isomorphism
$H_3(BG,\bZ)\cong \wH^{-4}(G,\bZ)$, where $c\colon M/G\to BG$ is the classifying map of the covering. 
\end{proposition}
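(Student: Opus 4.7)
The plan is to realize both $\sigma$ and $c_*[M/G]$ by explicit chain-level representatives and then identify them through the standard isomorphism $\wH^{-4}(G, \bZ) \cong H_3(BG; \bZ)$.

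First I would choose a free $\bZ G$-resolution $P_* \to \bZ$ and, by projectivity of the $C_i(M)$ and exactness of $P_*$ in positive degrees, construct a $\bZ G$-chain map $\tilde c \colon C_*(M) \to P_*$ lifting the augmentations. Tensoring with $\bZ$ over $\bZ G$ gives a chain map $c \colon C_*(M/G) \to C_*(BG)$ modelling the classifying map. If $\zeta \in C_3(M)$ is any lift of the fundamental cycle $[M/G] \in C_3(M) \otimes_{\bZ G} \bZ$, then automatically $N \zeta = [M]$ in $C_3(M)^G$ (with $N = \sum_{g \in G} g$), and $c_*[M/G]$ is represented by the cycle $\tilde c_3(\zeta) \otimes 1 \in P_3 \otimes_{\bZ G} \bZ$.

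Second, I would identify $\sigma$ at the chain level. Under the canonical identification $\wH^{-4}(G, \bZ) \cong \wH^0(G, \Omega^4(\bZ))$, the class $\sigma$ corresponds to a stable $\bZ G$-map $\bZ \to \Omega^4(\bZ) = Z_3(P_*)$. Lifting the inclusion $B_1 \hookrightarrow Z_1$ to a chain map of projective resolutions (with a truncated resolution of $B_1$ given by $0 \to \bZ \xrightarrow{[M]} C_3 \to C_2 \to B_1 \to 0$ so that $\bZ \simeq \Omega^2(B_1)$, and a resolution of $Z_1 = \Omega^2(\bZ)$ by the tail of $P_*$), the chain map is realized on $C_2 \to P_2$ and $C_3 \to P_3$ by $\tilde c_2$ and $\tilde c_3$, and on the leftmost $\bZ$ by $1 \mapsto \tilde c_3([M])$. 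Thus $\sigma$ is represented in $\wH^0(G, \Omega^4(\bZ))$ by $\tilde c_3([M]) = N\tilde c_3(\zeta) \in (\Omega^4(\bZ))^G$.

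Third, I would verify that the isomorphism $\wH^0(G, \Omega^4(\bZ)) \cong H_3(BG; \bZ)$, obtained by iterating the inverse of the Tate connecting homomorphism for the short exact sequences $0 \to \Omega^{k+1}(\bZ) \to P_k \to \Omega^k(\bZ) \to 0$ for $k = 0, 1, 2, 3$, carries the class of $N\tilde c_3(\zeta)$ to the class of $\tilde c_3(\zeta) \otimes 1$ in $H_3(P_* \otimes_{\bZ G} \bZ)$. At the first step, the norm-represented class has preimage $\tilde c_3(\zeta) \in P_3$ and yields the class of $\overline{\tilde c_2(\partial \zeta)} \in (\Omega^3(\bZ))_G$, consistent with $\tilde c$ being a chain map; iterating the same procedure three more times produces the desired class in $H_3(BG; \bZ)$, matching $c_*[M/G]$.

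The main obstacle is the chain-level verification in the second step that the inclusion $B_1 \hookrightarrow Z_1$ genuinely unwinds to $1 \mapsto \tilde c_3([M])$: this requires a careful comparison, via $\tilde c$, of the two partial resolutions used to identify $B_1$ and $Z_1$ with iterates of $\Omega$. The iterated Tate connecting computation in the third step is then a routine, if notationally heavy, homological-algebraic calculation.
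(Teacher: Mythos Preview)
Your proposal is correct and follows essentially the same route as the paper's proof: both identify $c_*[M/G]$ and $\sigma$ through the single datum of the free $\bZ G$-chain complex $C_*(M)$, using a comparison chain map into a projective resolution and dimension-shifting in Tate cohomology. The paper's argument is considerably more compressed---it invokes MacLane's description of $\Tor_3^{\bZ G}(\bZ,\bZ)$ by length-four exact sequences of free modules (so $C_*(M)$ itself \emph{is} the representative of $c_*[M/G]$) and then cites Wall for the dimension-shifting identification with the extension class of \eqref{eq:extclass}, whereas you unpack these identifications at the element level via $\tilde c$.
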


\begin{proof}
The description due to MacLane \cite[Ch.~V.8]{maclane1} of 
$\Tor_3^{\bZ G}(\bZ, \bZ)\cong H_3(G;\bZ)$ via chain complexes, shows that the image
 of the fundamental class $c_*[M/G] \in H_3(G;\bZ)$ is represented by  the chain complex 
 $C_*(M)$ of finitely-generated free $\bZ G$-modules. We can apply dimension-shifting
  in the ``complete" $\Ext$-theory to the formula:
$$\Tor_3^{\bZ G}(\bZ, \bZ) = \wH^{-4}(G;\bZ) = \Ext^{-4}_{\bZ G}(\bZ;\bZ) = \underline{\Hom}_{\bZ G}(\bZ, \Omega^4\bZ)$$
to identify $c_*[M/G]$ with the extension class of the sequence \eqref{eq:extclass} 
(see Wall \cite[\S 2]{wall-ppr} for more background).
\end{proof}
Similarly the map $\Omega^2(\bZ)\to H_1(M,\bZ)$ defines an extension class 
$\cE_M \in
H^2(G, H_1(M;\bZ))$ which appears in the long exact sequence  from 
Corollary \ref{long-exact-sequence} as the image of the generator under the map
$\wH^{0}(G,\bZ)\to \wH^2(G,H_1(M;\bZ))$.
This algebraic map arises geometrically as folllows. Let $X\subset M/G$ denote a connected one dimensional 
$G-CW$
sub--complex such that $\pi_1(X)\to \pi_1(M/G)$ is onto. If we denote $F=\pi_1(X)$, then we have a diagram of extensions

\eqncount
\begin{equation} \vcenter{
\xymatrix{1 \ar[r]  & R \ar[r] \ar@{->>}[d] & F \ar[r] \ar@{->>}[d] & G \ar[r]\ar@{=}[d]& 1\\
1 \ar[r] & \pi_1(M) \ar[r] & \pi_1(M/G) \ar[r] & G \ar[r] & 1\\}}
\end{equation}
Abelianizing kernels gives rise to the diagram

\eqncount
\begin{equation}\label{diag:twofive}
 \vcenter{ \xymatrix{1 \ar[r]  & R_{ab} \ar[r] \ar@{->>}[d] & \Phi \ar[r] \ar@{->>}[d] & G \ar[r] \ar@{=}[d] & 1\\
1 \ar[r] & H_1(M;\bZ) \ar[r] & Q \ar[r] & G \ar[r] & 1\\}}
\end{equation}
where $\Phi$ is the associated free abelianized extension. This extension realizes the universal
class of highest exponent in $\wH^2(G, \Omega^2(\bZ))$; note that $R_{ab}$ is a free abelian
group which as a $\bZ G$--module is stably equivalent to $\Omega^2(\bZ)$. By construction,
the bottom extension
represents the class $\cE_M\in H^2(G, H_1(M,\bZ))$ (see \cite[p.~207]{Hilton:1997}).

It is known (see \cite{Browder:1983}) that free group actions can be fruitfully analyzed using exponents. For
actions on rational homology 3--spheres the analysis can be done quite explicitly. We recall

\begin{definition}
If $A$ is a finite abelian group, the exponent of $A$, $\rm{exp}(A)$ is defined as the smallest integer $N>0$ such that $Na=0$ 
for all $a\in A$. For an element $a\in A$, its exponent $\rm{exp}(a)$ is defined as the exponent of the subgroup generated by $a$.
\end{definition}

\begin{corollary}\label{exponents}
Let $G$ denote a finite group acting freely on a rational homology $3$--sphere $M$, then
$|G| = \rm{exp}\,(\sigma)\cdot\rm{exp}\,(\cE_M)$. The element $\sigma\in \wH^{-4}(G,\bZ)$
is invertible  in the graded ring $ \wH^{*}(G,\bZ)$ (and the group $G$ has periodic cohomology) 
if and only if the extension $Q$ representing $\cE_M$ is split. 
\end{corollary}
\begin{proof}
Consider the exact sequence in Corollary \ref{long-exact-sequence} at $i=2$:
$$ \dots \to \wH^4(G;\bZ) \xrightarrow{\cup\, \sigma} \wH^0(G;\bZ) \to \wH^2(G; H_1(M;\bZ)) \to \dots .$$
Recall that by Tate duality (see Brown \cite[Chap.VI.7]{Brown:1982}) there is 
an element 
$\sigma^*\in \wH^4(G,\bZ)$ such that 
$\sigma\cup \sigma^*= |G|/\rm{exp}\,(\sigma) \in \cy{|G|}$. This implies that the exponent of
the subgroup of $\wH^0(G;\bZ)$ generated by the image of $\cup\sigma$ is precisely $\rm{exp}(\sigma)$.
Furthermore from the definition of $\cE_M$, we see that
$\rm{exp}(coker\cup\sigma) = \rm{exp}(\cE_M)$. Using the fact that for cyclic groups the exponent and 
the order are equal we obtain the formula
$|G| = \rm{exp}\,(\sigma)\cdot\rm{exp}\,(\cE_M)$.

A finite group has periodic cohomology if and only if $\wH^*(G,\bZ)$ 
has an element of non--zero
degree with exponent equal to $|G|$ (see \cite[Chapter VI,Theorem 9.1]{Brown:1982}).
This element is invertible in Tate cohomology and cup product by this element is an isomorphism
in all degrees. From our formula we see that the 
class $\sigma$ has this highest exponent if and only if 
$\cE_M=0$, which is equivalent to the splitting of the extension $Q$.
\end{proof} 

\begin{remark}
If $\sigma$ is invertible in fact it can be shown that $\wH^i(H ,H_1(M,\bZ))=0$
for every subgroup $H\subset G$ and all $i\in \bZ$. In other words the 
module $H_1(M, \bZ)$ is cohomologically trivial. This holds for example if
$|G|$ is relatively prime to $|H_1(M;\bZ)|$.
\end{remark}

\begin{remark}
On the other extreme,
if $\sigma$ is trivial, then the extension class $\cE_M$ has highest exponent equal to $|G|$. Using the stable
isomorphism 
$$\underline{\rm Hom}_{\bZ G}(\Omega^{2}(\bZ), H_1(M,\bZ))\cong 
\underline{\rm Hom}_{\bZ G}(\bZ, \Omega^{-2}(H_1(M,\bZ)))$$ 
we can represent $\cE_M$ by a rank one trivial submodule in $\Omega^{-2}(H_1(M,\bZ))$ (note that any finitely
generated $\bZ G$--module is stably equivalent to a $\bZ$--torsion free module via 
dimension--shifting). 
By \cite[Theorem 1.1]{Adem:1989} and its proof,  the short exact sequence
$$0 \to \bZ \to \Omega^{-2}(H_1(M,\bZ)) \to \Omega^{-5}\bZ \to 0$$
in the stable category is split exact. 
After shifting back,  we obtain a stable decomposition
$H_1(M,\bZ)\cong \Omega^2(\bZ)\oplus \Omega^{-3}(\bZ)$.
This will occur for rational homology spheres with a free $G$--action
where $H^4(G,\bZ)=0$. An example of this phenomenon is given by
the Mathieu group $M_{23}$
(see \cite{Milgram:2000}).
\end{remark}

\begin{remark}\label{rem:twoeleven}
For $G=(\cy{p})^r$ we have that 
$p\cdot \wH^k(G,\bZ)=0$
for $k\ne 0$, by the K\"unneth Theorem, so we see that the exponent of $\cE_M$ must be at least $p^{r-1}$.  
In particular the module $H_1(M;\bZ)$
must have $p^{r-1}$--torsion. 
\end{remark}

\begin{remark}\label{differentials}
Given a free action of a finite group $G$ on a rational homology sphere $M$,
we can consider the
Serre spectral sequence for the fibration $M\to M/G\to BG$. For $i\ge 1$, the maps $\wH^{i}(G,\bZ) \to \wH^{i+2}(G, H_1(M,\bZ))$ in the long exact sequence of Corollary \ref{long-exact-sequence} can be identified with the
differentials $d_2\colon E^{i,3}_2\to E^{i+2,2}_2$. Similarly the maps $\wH^i(G, H_1(M,\bZ))\to H^{i+3}(G,\bZ)$
can be identifed with the differentials $d_3\colon E^{i,2}_3\to E^{i+3,0}_3$. In particular 
exactness at the term $\wH^r(G, H^2(M,\bZ))$ reflects 
the fact that
$E^{r,2}_{\infty}=0$ for $r\ge 2$ given that $M/G$ is a 3--manifold. 
\end{remark}

\section{Restrictions in the homologically trivial case}
In this section we focus on the special case when the $G$--action on $M$ is trivial in homology. This
imposes some drastic restrictions.

\begin{proposition}
If $G$ acts freely and homologically trivially on a rational
homology 3--sphere $M$, then every elementary abelian subgroup
of $G$ has rank at most two.
\end{proposition}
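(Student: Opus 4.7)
The plan is to apply Corollary \ref{exponents} to a restricted elementary abelian $p$-subgroup $E=(\bZ/p)^r\subseteq G$ and to show that both factors in the resulting identity $|E|=\exp(\sigma_E)\cdot\exp(\cE_{M,E})$ are at most $p$, which will force $r\leq 2$. Since the restriction of a free, homologically trivial action to any subgroup is again free and homologically trivial, the entire framework of \S 2 applies verbatim with $G$ replaced by $E$, producing classes $\sigma_E\in\wH^{-4}(E;\bZ)$ and $\cE_{M,E}\in H^2(E;H_1(M;\bZ))$ together with
$$p^r \,=\, |E| \,=\, \exp(\sigma_E)\cdot\exp(\cE_{M,E}),$$
where the two exponents are the orders of the image and cokernel of cup product with $\sigma_E$:
$$\cup\,\sigma_E\colon \wH^4(E;\bZ)\longrightarrow \wH^0(E;\bZ)=\bZ/p^r.$$

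The key input from group cohomology is the standard fact that for $E=(\bZ/p)^r$ and any abelian group $A$ regarded as a trivial $\bZ E$-module, the Tate cohomology $\wH^n(E;A)$ is $p$-torsion in every nonzero degree. This follows from the universal coefficient theorem combined with the K\"unneth computation showing that $H_n(E;\bZ)$ is an elementary abelian $p$-group for every $n\geq 1$. Applying this with $A=\bZ$ forces the image of $\cup\,\sigma_E$ into the $p$-torsion subgroup of $\bZ/p^r$, which is cyclic of order $p$, so $\exp(\sigma_E)\leq p$. Applying it with $A=H_1(M;\bZ)$---a trivial $\bZ E$-module by the homologically trivial hypothesis---shows that $\wH^2(E;H_1(M;\bZ))$ is $p$-torsion; the long exact sequence of Corollary \ref{long-exact-sequence} then embeds the cokernel of $\cup\,\sigma_E$ into this group via $1\in\wH^0(E;\bZ)\mapsto\cE_{M,E}$, so $\exp(\cE_{M,E})\leq p$ as well.

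Combining the two bounds with the exponent identity yields $p^r\leq p\cdot p=p^2$, so $r\leq 2$. The one point that needs to be checked a little carefully is that the ``cokernel exponent'' of Corollary \ref{exponents} really does coincide with the order of the class $\cE_{M,E}$ in $H^2(E;H_1(M;\bZ))$; this is a direct unwinding of the long exact sequence, and it is here that the homological triviality hypothesis enters essentially---without the trivial $E$-action on $H_1(M;\bZ)$, the group $\wH^2(E;H_1(M;\bZ))$ could carry higher $p$-power torsion and the bound on $\exp(\cE_{M,E})$ would fail.
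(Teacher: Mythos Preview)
Your proof is correct and follows essentially the same route as the paper: restrict to an elementary abelian $p$-subgroup, invoke Corollary~\ref{exponents} to write $p^r=\exp(\sigma_E)\cdot\exp(\cE_{M,E})$, and bound each factor by $p$ using that $\wH^k((\bZ/p)^r,L)$ has exponent $p$ for trivial $L$ and $k\neq 0$. The only difference is that you spell out in more detail why each exponent is bounded by $p$ (via the image of $\wH^4$ and the embedding of the cokernel into $\wH^2(E;H_1(M;\bZ))$), whereas the paper compresses this into a single sentence.
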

\begin{proof}
 We may assume that $G = (\cy p)^r$ for some prime $p$. Note that $\wH^k(G,H_1(M;\bZ))$ has exponent $p$
when $H_1(M;\bZ)$ has trivial action and $k\ne 0$, by the K\"unneth Theorem.
From Corollary \ref{exponents}  we see that if $(\cy{p})^r$
acts freely and homologically trivially on $M$, then $p^r$ divides $p^2$ and the result follows.
\end{proof}

Let us write the trivial $\bZ G$--module $H_1(M;\bZ)$ as a direct sum
of finitely generated, finite abelian $p$--groups $A_p = H_1(M;\bZ)_{(p)}$. 
Then we have

\begin{lemma}\label{cyclic}
If $p$ is a prime number dividing the order of $G$, then $H_1(M;\bZ)_{(p)}$ is either trivial or cyclic.
\end{lemma}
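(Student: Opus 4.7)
The plan is to restrict the $G$--action to a cyclic subgroup $C \cong \bZ/p \leq G$ (which exists by Cauchy's theorem since $p \mid |G|$) and apply Corollary \ref{long-exact-sequence} to the $C$--action. Since $C$ still acts freely and homologically trivially on $M$, the Proposition and Corollary from \S 2 produce an extension \eqref{eq:extclass} of $\bZ C$--modules (it is the restriction of the $\bZ G$--extension) and an associated Tate long exact sequence
\[
\dots\to\wH^{i+2}(C,\bZ)\xrightarrow{\cup\,\sigma_C} \wH^{i-2}(C,\bZ)\to \wH^i(C,H_1(M;\bZ))\to \wH^{i+3}(C,\bZ)\to\dots
\]
where $\sigma_C = \res^G_C(\sigma)$.

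The key computational input is that $\wH^n(C,\bZ) = \bZ/p$ for $n$ even and $\wH^n(C,\bZ) = 0$ for $n$ odd. Specializing the sequence above at $i = 2$ therefore yields
\[
\bZ/p \xrightarrow{\cup\,\sigma_C} \bZ/p \longrightarrow \wH^{2}(C, H_1(M;\bZ)) \longrightarrow 0,
\]
so $\wH^{2}(C, H_1(M;\bZ))$ is a quotient of $\bZ/p$, and in particular has order at most $p$.

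The final step identifies this Tate cohomology with $A_p/pA_p$. Because $C$ acts trivially and $|C| = p$, we have $\wH^{2}(C, A) = A/pA$ for any abelian group $A$ with trivial $C$--action. Decomposing $H_1(M;\bZ) = \bigoplus_q A_q$ into its primary summands, the summands with $q \neq p$ are $p$--divisible, so $A_q/pA_q = 0$ there, and hence $\wH^{2}(C, H_1(M;\bZ)) = A_p/pA_p$. Combined with the previous bound, $A_p/pA_p$ has order at most $p$, and since $A_p$ is a finite abelian $p$--group, this forces $A_p$ to be cyclic. The only potential obstacle is the naturality of the exact sequence under restriction, but this is immediate from the construction of $\sigma$ from the chain complex $C_*(M)$, which is equally a complex of free $\bZ C$--modules.
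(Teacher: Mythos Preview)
Your proof is correct and follows essentially the same approach as the paper: restrict to a cyclic subgroup $C \cong \bZ/p$, read off from the long exact sequence that the relevant Tate group is a quotient of $\bZ/p$, and identify it with $A_p/pA_p$. The only cosmetic difference is that the paper evaluates the sequence at $i=4$ (using $\wH^7(C;\bZ)=0$) rather than at $i=2$ (using $\wH^5(C;\bZ)=0$), which is equivalent by the $2$-periodicity of $\wH^*(C;\bZ)$.
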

\begin{proof}
Consider a cyclic $C\cong \cy{p}$ in $G$. Since $H^7(C;\bZ) = 0$, from the
 sequence in Corollary \ref{long-exact-sequence} for $C$ at $i=4$, we see that
$H^4(C, H_1(M;\bZ))$ is a homomorphic image of $H^2(C,\bZ)= \cy p$. It follows  that 
$H^4(C,H_1(M;\bZ))\cong H^4(C, A_p) \cong  A_p/pA_p$ 
is either trivial or $\bZ /p$, which
proves the result. 
\end{proof}

 Next we will show that all groups acting freely
and homologically trivially on rational homology spheres can be described as quotients of periodic groups.

\begin{theorem}\label{thm:threethree}
Let $G$ denote a finite group acting freely and homologically trivially
on a rational homology 3--sphere $M$. Let $\pi = p_1\dots p_r$ denote the product
of precisely those primes which divide both $|G|$ and $|H_1(M;\bZ)|$.  
Then there exists an extension
$$1\to H_1(M;\bZ)_{(\pi)} \to Q_{\pi} \to G\to 1$$
where $H_1(M;\bZ)_{(\pi)}$ is a central, cyclic subgroup and $Q_{\pi}$ is a group with periodic cohomology.
\end{theorem}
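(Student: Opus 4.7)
The plan is to construct $Q_\pi$ as a quotient of the group $Q$ from diagram \eqref{diag:twofive}, verify that its kernel is central and cyclic, and then establish periodic cohomology Sylow-by-Sylow. Because $G$ acts trivially on $H_1(M;\bZ)$, the decomposition
\[
H_1(M;\bZ) = H_1(M;\bZ)_{(\pi)} \oplus H_1(M;\bZ)_{(\pi')}
\]
is a splitting of $\bZ G$-modules. Pushing out the bottom row of \eqref{diag:twofive} along the projection onto the $\pi$-summand yields $1 \to H \to Q_\pi \to G \to 1$ with $H := H_1(M;\bZ)_{(\pi)}$. Centrality of $H$ is immediate, and cyclicity follows from Lemma \ref{cyclic} applied to each prime $p \in \pi$: the components $H_1(M;\bZ)_{(p)}$ are cyclic of coprime orders, so their direct sum $H$ is cyclic.

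For periodic cohomology of $Q_\pi$, I would work one prime at a time. At a prime $p \notin \pi$ dividing $|Q_\pi|$, the Sylow $p$-subgroup of $Q_\pi$ coincides with $G_p$, and Corollary \ref{exponents} gives the result: since $H_1(M;\bZ)$ has no $p$-torsion, the $p$-primary part of $\exp(\cE_M)$ is trivial, forcing $\exp(\sigma)_p = |G|_p$, so $\sigma$ restricts to a periodicity class on $G_p$. At a prime $p \in \pi$ the Sylow is a central extension $1 \to H_{(p)} \to S_p \to G_p \to 1$ with cyclic kernel, and I would use the geometric realization of $Q_\pi$ as a deck group: the regular cover $\hat M \to M/G$ corresponding to the kernel of $\pi_1(M/G) \twoheadrightarrow Q_\pi$ is a closed oriented 3-manifold on which $Q_\pi$ acts freely. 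Re-running the set-up of Section 2 for this new action produces a sequence of $\bZ Q_\pi$-modules $0 \to \Omega_{Q_\pi}^{-2}(\bZ) \to \Omega_{Q_\pi}^{2}(\bZ) \to H_1(\hat M;\bZ) \to 0$ with associated classes $\sigma_{Q_\pi}$ and $\cE_{\hat M}$. Combined with the five-term sequence for $1 \to \pi_1(\hat M) \to \pi_1(M) \to H \to 1$ and the vanishing $H_2(H)=0$ for cyclic $H$, which identifies the $H$-coinvariants of $H_1(\hat M;\bZ)$ with $H_1(M;\bZ)_{(\pi')}$, Corollary \ref{exponents} applied to the new action then forces $\exp(\sigma_{Q_\pi})_p = |Q_\pi|_p$ at each $p \in \pi$, establishing periodicity on that Sylow.

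The main obstacle is the claim that the $\pi$-primary part of $\cE_{\hat M}$ vanishes, since the five-term sequence controls only the $H$-coinvariants rather than the full module $H_1(\hat M;\bZ)$, and extra $p$-torsion from deeper commutator structure of $\pi_1(M)$ may in principle persist. Closing this gap calls for a finer use of the cyclic structure of $H$, for example an iterative argument through covers corresponding to individual primes $p \in \pi$, or a direct cohomological analysis of the classes $\sigma_C,\cE_M|_C$ for cyclic subgroups $C = \bZ/p \leq G$ via Corollary \ref{exponents}. In the $G$-CW setting mentioned after Theorem A the manifold cover $\hat M$ must be replaced by an algebraic construction built from the chain complex \eqref{eq:extclass} itself.
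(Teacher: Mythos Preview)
Your construction of $Q_\pi$ and the treatment of primes $p\notin\pi$ match the paper. The gap is in your treatment of primes $p\in\pi$, and it is more serious than you acknowledge. Passing to the cover $\hat M$ and re-running Proposition~2.1 for the $Q_\pi$-action requires $\hat M$ to be a rational homology sphere, and this is \emph{not} automatic: for a finite regular cover $\hat M\to M$ with deck group $H$, the Cartan--Leray spectral sequence gives only $(H_1(\hat M;\bQ))_H\cong H_1(M;\bQ)=0$, which does not force $H_1(\hat M;\bQ)=0$ (take $H=\bZ/2$ acting on $\bQ$ by $-1$). So before you even reach the torsion issue you flag, the short exact sequence $0\to\Omega_{Q_\pi}^{-2}(\bZ)\to\Omega_{Q_\pi}^{2}(\bZ)\to H_1(\hat M;\bZ)\to 0$ is not available.

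The paper avoids $\hat M$ entirely and takes precisely the alternative you mention in your last paragraph but do not pursue. For each $p\mid\pi$ and each cyclic $C\cong\bZ/p\leq G$, one compares the long exact sequence of Corollary~\ref{long-exact-sequence} for $G$ and for $C$ via restriction. Using Lemma~\ref{cyclic} to identify $H_1(M;\bZ)_{(p)}$ as a nonzero cyclic $p$-group, one checks that $\wH^0(C,\bZ)\to\wH^2(C,H_1(M;\bZ))$ is an isomorphism, so $\cE_M$ restricts nontrivially to every such $C$ and hence $Q_\pi|_C$ is non-split. Now if $u\in Q_\pi$ has order $p$ and maps nontrivially to $G$, its image generates such a $C$, and $u$ would split $Q_\pi|_C$; contradiction. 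Thus every element of order $p$ lies in the cyclic group $H$, so $Q_\pi$ has a unique subgroup of order $p$ and its Sylow $p$-subgroup is cyclic or generalized quaternion. This elementary group-theoretic step replaces the entire covering-space argument you attempted.
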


\begin{proof}
Suppose that $G$ acts freely on a rational homology 3-sphere $M$ and consider the group extension
$$1\to \pi_1(M)\to \pi_1(M/G)\to G\to 1.$$
Let $L$ denote the kernel of the map $\pi_1(M)\to H_1(M;\bZ)_{(\pi)}$;
then it is normal in both $\pi_1(M)$ and $\pi_1(M/G)$ and
we can consider the associated central quotient extension:
$$0\to H_1(M;\bZ)_{(\pi)}\to Q_{\pi} \to G\to 1.$$ 
 Note that $H^2(G,H_1(M;\bZ))\cong H^2(G,H_1(M;\bZ)_{(\pi)})$, since $H^2(G; A_p)= 0$ for $p \nmid |G|$, where $A_p$ denotes the $p$-primary part of $H_1(M;\bZ)$ as before.
Our construction of the extension for $Q_\pi$ is the obvious quotient of the extension for $Q$ representing the class
$\cE_M$ apprearing in Corollary \ref{exponents}.

Suppose $p$ is a prime that divides $|G|$ but which is relatively prime to $|H_1(M;\bZ)|$.
Then $H^2(\Syl_p(G), H_1(M;\bZ))=0$ and so $\Syl_p(G)= \Syl_p(Q_{\pi})$ is periodic by Corollary \ref{exponents}. Now 
suppose that $p$ is a prime which divides $\pi$, and let $C\subset G$ denote a cyclic 
subgroup of order $p$. By naturality we have a commutative diagram, where
the rows are exact sequences:
\[  
\xymatrix{{\wH^0(G, \bZ)}  \ar[r] \ar[d] & {\wH^2(G, H_1(M;\bZ))}\ar[r] \ar[d] & {\wH^5(G,\bZ)} \ar[d] \\
{\wH^0(C, \bZ)} \ar[r]^(0.4){\approx} & {\wH^2(C, H_1(M;\bZ))} \ar[r] & 0\\}
\]
The isomorphism in the lower row of this diagram comes from the rest of the sequence
$$0 \to \wH^1(C; H_1(M;\bZ)) \to \wH^4(C;\bZ) \to \wH^0(C;\bZ) \to \wH^2(C, H_1(M;\bZ)) \to 0$$
since $\wH^1(C; H_1(M;\bZ))  \cong \cy p \cong \wH^4(C;\bZ)$.

By Lemma \ref{cyclic}  the $p$--component of $H_1(M;\bZ)$ is a finite cyclic $p$--group with a trivial
$C$--action. Hence $\wH^i(C, H_1(M;\bZ))\ne 0$ for all $i$. The map
$\wH^0(G,\bZ)\to \wH^0(C,\bZ)$ sends a generator to a generator so the extension class
$\cE_M \in H^2(G, H_1(M;\bZ))$ restricts non--trivially on all such subgroups $C$, and hence the
corresponding restricted extensions of the form
$0\to H_1(M;\bZ)_{(\pi)}\to Q_{\pi} |_C \to C\to 1$
are all non--split.  

If we take $H=H_1(M;\bZ)_{(\pi)}$, which we know to be 
cyclic by Lemma \ref{cyclic}, then the extension expresses $G$ as the quotient $Q_{\pi}/H$ where $H$ is a central,
cyclic subgroup and every restricted group of the form $Q_{\pi}|C$ is non--split, where
$C\cong \cy{p} $, and $p$ divides $\pi$. 
Let $u\in Q_{\pi}$ denote an element of order $p$; if the subgroup generated by 
$H$ and $u$ is not
cyclic, then it must be split abelian, a contradiction. Therefore all elements of order
$p$ in $Q_{\pi}$ lie in $H$, a cyclic subgroup, and so $Q_{\pi}$ has no rank two $p$--elementary abelian subgroups. 
We have already established this for the primes which do not divide $\pi$, whence we 
infer that $Q_{\pi}$ has periodic cohomology. 
\end{proof}

\begin{proposition}\label{prop:threefour}
The period of $Q_\pi$  is  two or four.
\end{proposition}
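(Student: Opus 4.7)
The plan is to bound, for each prime $p$ dividing $|Q_\pi|$, the $p$-primary part of the period of $Q_\pi$ by $4$; since the period is the least common multiple of its local primary parts, this gives the conclusion. Since $Q_\pi$ has periodic cohomology, each Sylow $p$-subgroup $S_p$ is cyclic or, for $p=2$, possibly generalized quaternion. In the quaternion case, the $2$-primary period of any periodic group with that Sylow is classically equal to $4$ (the faithful $2$-dimensional representation of $Q_{2^n}$ is preserved by $\Aut(Q_{2^n})$, so its Euler class in $\wH^{4}(Q_{2^n};\bZ)$ is always stable), so it suffices to handle cyclic Sylow $S_p$ of order $p^k$.

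For cyclic $S_p$, the Cartan--Eilenberg stable elements theorem identifies the $p$-primary period of $Q_\pi$ with $2 e_p$, where $e_p$ is the exponent of the image $W$ of $N_{Q_\pi}(S_p)/C_{Q_\pi}(S_p)$ in $\Aut(S_p) = (\bZ/p^k)^*$. Since $S_p$ is abelian and normal in its own normalizer, $S_p \subset C_{Q_\pi}(S_p)$, so $W$ has order prime to $p$. I now split into two subcases. If $p \mid \pi$, then the $p$-part $H \cap S_p$ of the cyclic group $H$ is nontrivial and central in $Q_\pi$; hence $W$ fixes it pointwise and therefore lies in the pointwise stabilizer of a nontrivial subgroup of $\bZ/p^k$ inside $(\bZ/p^k)^*$, which is a $p$-group. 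Intersecting this $p$-group with the $p'$-group $W$ yields $W = 1$, so the local period is $2$. If instead $p \mid |G|$ but $p \nmid \pi$, then $p \nmid |H_1(M;\bZ)|$, so $\cE_M$ vanishes $p$-locally and Corollary~\ref{exponents} gives that $\sigma|_{G_p}$ generates $\wH^{-4}(G_p;\bZ) \cong \bZ/p^k$. Since $p \nmid |H|$, the quotient $Q_\pi \to G$ restricts to an isomorphism on $p$-Sylow subgroups and on their normalizer quotients, so $W$ can be computed inside $G$. The induced action of $a \in W \subset (\bZ/p^k)^*$ on the generator of $\wH^{-4}(G_p;\bZ)$ is by $a^{-2}$, and its invariance forces $a^2 \equiv 1 \pmod{p^k}$; hence $W \subset \{\pm 1\}$ and $e_p \le 2$.

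The main obstacle is the ramified case $p \mid \pi$, where the argument relies on the interplay of two complementary constraints: centrality of $H \cap S_p$ confines $W$ to a $p$-group, while abelianness of the Sylow forces $W$ into the $p'$-part of $\Aut(S_p)$, and these intersect trivially to yield $W = 1$ at once. Assembling all the local bounds, every $p$-primary part of the period of $Q_\pi$ divides $4$, so the period of $Q_\pi$ divides $4$ and is therefore equal to $2$ or $4$.
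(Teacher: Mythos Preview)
Your proof is correct but follows a somewhat different route from the paper's. Both begin by reducing to $p$-periods via Swan's results. For primes $p$ dividing $|G|$, the paper argues uniformly: it identifies $\Phi_p(Q_\pi)\cong\Phi_p(G)$ and then uses the $p$-local portion of the long exact sequence of Corollary~\ref{long-exact-sequence} together with the universal coefficient theorem to show that $\wH^4(G;\bZp)\ne 0$ whenever the $p$-period is at least four, forcing the $p$-period to divide four. You instead split into two subcases. When $p\mid\pi$ you give a direct group-theoretic argument---centrality of $H_p$ traps the Weyl group $W$ inside a $p$-subgroup of $\Aut(S_p)$ while $W$ has order prime to $p$, so $W=1$---which is more elementary and actually sharper, yielding $p$-period exactly $2$ rather than merely $\le 4$. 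When $p\nmid\pi$ your argument is essentially a dual reformulation of the paper's: instead of showing $\wH^4(G;\bZp)\ne 0$ directly, you show that $\sigma|_{G_p}$ generates $\wH^{-4}(G_p;\bZ)$ and then invoke its $W$-invariance (as a stable element) to force $a^2\equiv 1\pmod{p^k}$. One small imprecision: for $p=2$ with cyclic Sylow, the $2$-torsion of $(\bZ/2^k)^*$ can be larger than $\{\pm 1\}$ when $k\ge 3$, so ``$W\subset\{\pm 1\}$'' does not follow from $a^2\equiv 1$ alone; however, since $W$ is a $p'$-group sitting inside the $2$-group $\Aut(\bZ/2^k)$, one has $W=1$ regardless, and the conclusion $e_p\le 2$ stands.
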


\begin{proof} 
Consider the central group extension
$1\to H \to Q \to G\to 1$ where $H:=H_1(M,\bZ)_{(\pi)}$ and $Q: =Q_\pi$. 
By Swan \cite{Swan:1960}, the period of $Q$ is the least common multiple of the $p$-periods of $Q$ taken over all primes $p$ dividing $|Q|$. The $p$-periods are determined by group cohomology with $p$-local coefficients. By \cite[Theorem 1]{Swan:1960}, the $2$-period of $Q$ is 2 or 4.   Moveover, by \cite[Theorem 2]{Swan:1960}, the $p$-period of $Q$ for $p$ odd is twice the order of $\Phi_p(Q)\cong N_Q(\Syl_p(Q))/C_Q(\Syl_p(Q))$, the group
of automorphisms of $\Syl_p(Q)$ induced by inner automorphisms of $Q$. 
Note that as $\Syl_p(Q)$ is cyclic, its automorphism group is also cyclic and hence $\Phi_p(Q)$ is cyclic of order prime to $p$.

As explained in \cite[Lemma 3]{Swan:1960}, the action on $\wH^{2i}(\Syl_p(Q),\bZp)$ is given by multiplication by $r^i$, where $r$ is an
integer prime to $p$ that is a multiplicative generator of $\Phi_p(Q)$. Hence this action has invariants only when
$i$ is a multiple of $|\Phi_p(Q)|$, and $\wH^*(Q,\bZp)\ne 0$ only in  degrees which are multiples
of $2|\Phi_p(Q)|$.

 If $p$ divides $|G|$ then
the projection $Q\to G$ induces an isomorphism $\Phi_p(Q) \cong \Phi_p(G)$, and hence the $p$-periods of $Q$ and $G$ are equal. 
Consider now the following portion of the $p$--local version of the long exact cohomology sequence from Corollary 2.3: 
  $\wH^4(G;\bZp) \to \wH^0(G;\bZp) \to \wH^2(G;H_p)$,
where $H_p=\Syl_p(H) = H_1(M;\bZp)$.
 As $|G|$ is divisible by $p$, the middle term is non-zero.
Now if $\wH^2(G;H_p) = 0$, then $\wH^4(G;\bZp)\ne 0$
and we conclude 
that $G$  has $p$-period dividing four (a $p$--local version of Corollary \ref{exponents}).  
However, by the universal coefficient 
theorem applied to the trivial $G$--module $H_p$, we see  that
$\wH^2(G,H_p)=0$ if the $p$--period of $G$ is four or higher. 
Hence we conclude that 
the $p$-periods of $G$ and  $Q$ must both be either two or four.  
\end{proof}
Theorem A follows from Theorem 3.3 and Proposition 3.4.
The structure of $G$ is more explicit for $p$-groups. 
\begin{corollary}
A finite $p$--group $G$ acts freely and homologically trivially on some rational homology 3--sphere $M$
with non--trivial $p$--torsion in $H_1(M;\bZ)$ if and only if (i) $G$ is cyclic  or 
(ii) $p=2$, $H_1(M; \bZ)_{(2)}\cong \cy{2}$ and
$G$ is a dihedral group.
\end{corollary}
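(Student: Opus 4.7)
I would derive both directions of the corollary from Theorem A (and Lemma~3.2) together with the classical classification of finite $p$-groups with periodic cohomology. For the forward implication, since $G$ is a $p$-group and $H_1(M;\bZ)$ has non-trivial $p$-torsion, the product of primes $\pi$ from Theorem A reduces to the single prime $p$. Theorem A then yields a central extension
\[ 1 \to H \to Q_p \to G \to 1, \]
with $H = H_1(M;\bZ)_{(p)}$ cyclic (Lemma~3.2) and $Q_p$ of periodic cohomology with period two or four. Since both $H$ and $G$ are $p$-groups, so is $Q_p$. By the classical classification (see e.g.\ Brown \cite[Chapter VI.9]{Brown:1982}), a $p$-group with periodic cohomology is either cyclic (for any prime $p$) or generalized quaternion $Q_{2^n}$ (when $p=2$, for some $n\geq 3$). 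In the cyclic case, $G = Q_p/H$ is cyclic, giving~(1). In the generalized quaternion case, the center $Z(Q_{2^n})$ has order two; since $H$ is a non-trivial cyclic central subgroup it must coincide with $Z(Q_{2^n})$, so $H \cong \bZ/2$ and $G = Q_{2^n}/Z(Q_{2^n}) \cong D_{2^{n-1}}$ is dihedral, giving~(2).

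For the converse I would exhibit explicit actions. When $G = \bZ/p^b$ is cyclic, the lens space $L(p^{a+b},1)$ is a rational homology $3$-sphere with $H_1 = \bZ/p^{a+b}$, and $\bZ/p^b$ acts freely as the deck transformation group of the covering $L(p^{a+b},1) \to L(p^{a},1)$; the induced action on $H_1$ is trivial since $\pi_1$ is abelian and deck transformations act by conjugation on the fundamental group. When $G = D_{2^{n-1}}$, start from the standard free orthogonal action of $Q_{2^n}$ on $S^3$; dividing out by the central $\bZ/2$ yields a free action of $D_{2^{n-1}} = Q_{2^n}/Z(Q_{2^n})$ on $\RP^3$, which is automatically homologically trivial because $\Aut(H_1(\RP^3;\bZ)) = \Aut(\bZ/2)$ is trivial.

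The only point that needs more than a one-line justification is the identification of $H$ with the full center in the generalized quaternion case, which reduces to the elementary fact that $Z(Q_{2^n}) \cong \bZ/2$ is the unique subgroup of order two. Beyond invoking the classification of $p$-groups with periodic cohomology, the argument is a straightforward specialization of Theorem A to the $p$-group setting, and no serious new obstacle arises.
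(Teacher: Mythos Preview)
Your argument is correct and follows essentially the same route as the paper: reduce via Theorem~A to the classification of periodic $p$-groups (cyclic or generalized quaternion), identify the possible central quotients, and realize the converse by dividing a free action on $S^3$ by a central cyclic subgroup. One small slip in your lens space example: the covering goes the other way, $L(p^a,1) \to L(p^{a+b},1)$, so the free $\bZ/p^b$-action is on $M = L(p^a,1)$ (with $a \geq 1$ to ensure non-trivial $p$-torsion in $H_1(M;\bZ)$), not on $L(p^{a+b},1)$.
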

\begin{proof} The finite groups of the form $Q/H$ where $Q$ is a periodic $p$--group and $H$ is a 
non--trivial central cyclic subgroup are precisely the cyclic groups and $Q(2^n)/Z(Q(2^n))$, where
$Q(2^n)$ is a generalized quaternion group of order $2^n$, $n\ge 3$, with centre isomorphic to
$\cy{2}$ and quotient a dihedral group of order $2^{n-1}$. Conversely all the groups $Q$ appearing
above act freely, on $\mathbb \bS^3$, hence all the quotients $G=Q/H$ act 
freely on a rational homology sphere; note these actions are homologically trivial as $H_1(\mathbb \bS^3/H, \bZ)
\cong H$, a central subgroup.
\end{proof}

\begin{corollary}
Let $G$ act freely and homologically trivially
on a rational homology 3--sphere $M$.
\begin{enumerate}
\item
If both $|G|$
and $|H_1(M;\bZ)|$ are even, then $\Syl_2(G)$  is either cyclic or dihedral.
\item
If $p$ is an odd prime dividing $|G|$, then  $\Syl_p(G)$ is cyclic. 
\item 
If $(|H_1(M; \bZ)|, p)=1$ then $\Syl_p(G)$ is either cyclic or generalized quaternion.
\end{enumerate}
\end{corollary}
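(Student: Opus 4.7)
The plan is to combine the central extension
\[
1 \to H \to Q_\pi \to G \to 1
\]
supplied by Theorem A with the preceding proposition, which tells us that $Q_\pi$ has periodic cohomology of period two or four. Passing to a $p$--Sylow subgroup, the central cyclic subgroup $H$ contributes its $p$--part $H_{(p)} := H_1(M;\bZ)_{(p)}$, which lies inside every Sylow $p$--subgroup of $Q_\pi$ because it is central; hence I obtain a central extension
\[
1 \to H_{(p)} \to \Syl_p(Q_\pi) \to \Syl_p(G) \to 1.
\]
By Lemma \ref{cyclic}, $H_{(p)}$ is cyclic whenever $p$ divides $|G|$, and it is trivial whenever $p$ does not divide $|H_1(M;\bZ)|$. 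Since $\Syl_p(Q_\pi)$ is a periodic $p$--group, the classical classification of such groups forces $\Syl_p(Q_\pi)$ to be cyclic when $p$ is odd, and cyclic or generalized quaternion when $p=2$.

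With this setup each clause follows quickly. For (iii), $H_{(p)}$ is trivial, so $\Syl_p(G) \cong \Syl_p(Q_\pi)$ is cyclic or generalized quaternion as claimed. For (ii), with $p$ odd dividing $|G|$, the group $\Syl_p(Q_\pi)$ is cyclic, and its quotient $\Syl_p(G)$ by $H_{(p)}$ is again cyclic, regardless of whether or not $p$ divides $|H_1(M;\bZ)|$.

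Clause (i) is where the one small subtlety arises. Here $p=2$, the subgroup $H_{(2)}$ is a nontrivial central cyclic subgroup of $\Syl_2(Q_\pi)$, and the latter is either cyclic or generalized quaternion. In the cyclic case the quotient $\Syl_2(G)$ is cyclic. In the generalized quaternion case $\Syl_2(Q_\pi) \cong Q_{2^n}$ with $n \geq 3$, and the step that needs a word of care is the observation that every nontrivial central cyclic subgroup of $Q_{2^n}$ must equal its centre $\bZ/2$; the quotient is then the dihedral group of order $2^{n-1}$ (which for $n=3$ is the Klein four group, viewed as the degenerate dihedral group $D_4$). I expect this last point---the rigidity of central cyclic subgroups inside a generalized quaternion group---to be the only mildly non--routine ingredient, and it is a standard fact about the subgroup structure of $Q_{2^n}$. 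Assembling the three cases completes the corollary.
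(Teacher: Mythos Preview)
Your argument is correct and is essentially the approach the paper intends: the corollary is stated without its own proof, and is meant to follow immediately from Theorem A together with the preceding corollary on $p$--groups by passing to Sylow subgroups, exactly as you do. Your handling of the one non-routine point---that a nontrivial central subgroup of $Q_{2^n}$ is forced to equal its centre $\bZ/2$, giving a dihedral quotient---matches the analysis in the proof of the preceding corollary.
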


\begin{remark}
It is also an interesting problem to determine which
 groups can act homologically trivially on higher dimensional rational homology spheres.
  Using exponents it can be shown that 
if $G$ acts freely and homologically trivially on a simply--connected rational homology
$n$--sphere, then the rank of $G$ can be at most $n-2$. 
We expect that further group theoretic restrictions 
will play a role.
\end{remark}

\section{Existence of homologically trivial actions}

As mentioned in the Introduction, any finite group can act freely on some rational homology 
$3$-sphere if there is no homological triviality assumption. 
This was first proved by Pardon \cite{Pardon:1980} using local surgery theory, extending a result of Browder and Hsiang 
\cite[p.~267]{Browder:1978}. The direct 3-dimensional argument of Cooper 
and Long \cite{Cooper:2000} avoids the surgery formalism, but does not give any control on the torsion in $H_1(M;\bZ)$.

\begin{proposition}\label{prop:pardon}
Let $G$ be a finite group, and $p$ a prime such that $(p, |G|) = 1$. Then $G$ acts freely on some $p$-local homology $3$-sphere.
\end{proposition}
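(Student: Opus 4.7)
The plan is to combine the unrestricted existence theorem of Cooper--Long with $G$-equivariant Dehn surgery, using the hypothesis $(p,|G|)=1$ to make the surgery succeed $p$-locally. By \cite{Cooper:2000}, there is a closed oriented 3-manifold $M_0$ with a free, orientation-preserving $G$-action that is a rational homology 3-sphere; set $N_0 := M_0/G$ and $T := H_1(M_0;\bZ)_{(p)}$, a finite abelian $p$-group with $G$-action. Since surgery on null-homologous curves in a 3-manifold preserves the vanishing of $H_*(\,\cdot\,;\bbQ)$ in middle degrees, the goal reduces to performing a finite sequence of $G$-equivariant Dehn surgeries, each strictly decreasing $|T|$, until $T = 0$.

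Second, because $(|G|,p)=1$, the group ring $\bZ_{(p)}[G]$ is semisimple by Maschke's theorem, so $T$ splits as a $\bZ_{(p)}[G]$-module into a direct sum of simple submodules. Pick such a simple summand $L$ and a generator $\alpha$ of a cyclic subgroup, chosen so that the norm $N(\alpha) = \sum_{g\in G} g_*\alpha$ has the same $p$-local order as $\alpha$ (achievable since $N$ restricted to $L^G$ is multiplication by $|G|$, a unit modulo $p$; equivalently, replace $\alpha$ by $m\alpha$ where $m|G|\equiv 1$ modulo the order of $\alpha$). Represent $\alpha$ by an embedded loop in $M_0$; by a standard 3-dimensional transversality argument exploiting freeness, one can isotope this loop so that its $G$-orbit is $|G|$ disjoint embedded circles, equivalently so that the image $\bar\gamma \subset N_0$ is an embedded loop whose preimage is $G\cdot\gamma$.

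Third, perform Dehn surgery on $\bar\gamma \subset N_0$, lifting to $G$-equivariant surgery on $G\cdot\gamma$ in $M_0$. Choose the framing so that the new meridian kills the class $N(\alpha) \in H_1(M_0;\bZ)$; a Mayer--Vietoris computation with $\bZ_{(p)}$-coefficients, applied to the decomposition $M_0 = (M_0 \smallsetminus \nu(G\cdot\gamma)) \cup (|G|\cdot S^1\times D^2)$ and the corresponding decomposition of the surgered manifold $M_1$, shows that $|T(M_1)_{(p)}| < |T|$. Iterating terminates with a closed 3-manifold $M$ carrying a free $G$-action and satisfying $H_1(M;\bZ)_{(p)} = 0$, i.e.\ a $p$-local homology 3-sphere.

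The principal obstacle is the third step: arranging the framing so that the net effect on $H_1(\,\cdot\,;\bZ_{(p)})$ is exactly to quotient by $\alpha$ without introducing fresh $p$-torsion. This is a linking-form computation whose essential input is the invertibility of $|G|$ modulo powers of $p$, so it is precisely the hypothesis $(p,|G|)=1$ that makes the equivariant surgery work; without it one would have to invoke the full strength of Pardon's $p$-local surgery machinery from \cite{Pardon:1980}, which controls the surgery obstruction in $L_3(\bZ_{(p)}[G])$ under the same arithmetic condition.
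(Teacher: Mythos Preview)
The paper's proof is a one-line citation: it invokes Pardon \cite[Theorem B]{Pardon:1980} together with the standard observation that a surgery existence result in dimensions $4k+3\ge 7$ yields a $3$-dimensional existence result up to $\bZ_{(p)}$-homology. No construction is carried out. Your proposal is therefore a genuinely different, hands-on route via equivariant Dehn surgery starting from the Cooper--Long action.

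However, the argument as written has a real gap at the heart of Steps~2--3. You choose a simple $\bZ_{(p)}[G]$-summand $L\subset T$ and an element $\alpha\in L$, and you want the norm $N(\alpha)=\sum_g g_*\alpha$ to have the same $p$-order as $\alpha$. Your justification (``$N$ restricted to $L^G$ is multiplication by $|G|$'') only applies when $L$ is the \emph{trivial} module; for every nontrivial simple summand one has $L^G=0$ and $N\equiv 0$ on $L$, so the desired $\alpha$ does not exist. More structurally, there is a conflation of upstairs and downstairs: equivariant surgery on the orbit $G\cdot\gamma$ imposes a $\bZ[G]$-relation in $H_1(M_0)$ (one relation per component, permuted by $G$), not the single relation $N(\alpha)=0$, so ``choose the framing so the new meridian kills $N(\alpha)$'' is not what is happening in $M_0$. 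Finally, the assertion that the Mayer--Vietoris computation gives $|T(M_1)_{(p)}|<|T|$ is exactly the delicate point: Dehn surgery can create new $p$-torsion through the longitude class, and you have not shown it does not. The hypothesis $(p,|G|)=1$ does make $\bbF_p[G]$ semisimple and an inductive equivariant-surgery argument along these lines can be made to work, but carrying it out correctly (controlling the full $\bZ_{(p)}[G]$-submodule generated by the new meridians and the linking form) is essentially the content of Pardon's local surgery, which is why the paper simply cites it.
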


\begin{proof}This statement is a special case of Pardon \cite[Theorem B]{Pardon:1980}, 
together with the standard remark that high-dimensional surgery existence results in dimensions $4k+3 \geq 7$, imply existence results in dimension three up to homology.  
\end{proof}

It appears to be much more difficult to solve the existence problem for a \emph{given} rational homology $3$-sphere $M$. For example, what if we consider only the space form groups but do not require homologically trivial actions ?
\begin{question} 
 If $G$ acts freely on $\bS^3$, can it act freely on a given rational homology $3$-sphere
$M$ with $(|G|, |H_1(M;\bZ)|) = 1$~?
\end{question}

We will now use the information in Theorem A to make some remarks about the existence of \emph{homologically trivial} actions on rational homology
$3$-spheres. 

The finite groups which can act freely on $\bS^3$ are now known (by the work of Perelman \cite{Lott:2007}): they are precisely 
the periodic groups in Hopf's list \cite{Hopf:1926}. For any of these groups we can obtain examples of homologically trival actions on quotients $\bS^3/H$, where $H$ is a central cyclic subgroup. 
 \begin{example}\label{ex:dihedral}
Let $H$ denote the quotient of the binary dihedral group $Q(4n)$ by its unique central subgroup of order two. Then $H$ is dihedral of order $2n$, and has rank two if $n$ is even. 
 \end{example}

Of the remaining period four groups  (those which do not act freely on $\bS^3$), we first consider those which do not satisfy Milnor's $2p$-condition,  which requires that every subgroup of order $2p$, for $p$ prime,  must be cyclic (see \cite[Corollary 1]{milnor2}).

\begin{proposition}\label{prop:fourthree}
 Let $G$ be a finite group with periodic cohomology of period four containing a non-cyclic subgroup of order $2p$, for some odd prime $p$. Then $G$ is  the  product of a dihedral group by a cyclic group of relatively prime order. Any quotient of $G$ by a central cyclic subgroup can act freely and homologically trivially on some rational homology $3$-sphere.
\end{proposition}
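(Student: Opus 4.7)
The plan is to first establish the structural form $G \cong D \times C$ via a Sylow and cohomological analysis, then observe that central cyclic quotients preserve this form, and finally realize the action by lifting to a binary double cover acting on $S^3$. Since $G$ has period four, its odd Sylow subgroups are cyclic and its Sylow 2-subgroup is cyclic or generalized quaternion. If $\Syl_2(G) \cong Q_{2^a}$ with $a \geq 3$, then $G$ has a unique involution, which is therefore central (conjugation permutes involutions); but a central involution cannot invert any nontrivial element of odd order, so $G$ would contain no $D_{2p}$, contradicting the hypothesis. Hence $\Syl_2(G)$ is cyclic, every Sylow subgroup of $G$ is cyclic, and by Zassenhaus' theorem $G$ is metacyclic: $G = \langle a,b : a^m = b^n = 1,\ bab^{-1} = a^r\rangle$ with $(m,n) = 1$. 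A standard cohomology computation shows the period of $G$ equals $2d$, where $d$ is the order of $r$ in $(\bZ/m)^\times$. Period four forces $d \leq 2$; the case $d = 1$ gives $G$ cyclic, incompatible with $D_{2p} \leq G$, so $d = 2$ and $r^2 \equiv 1 \pmod{m}$. The requirement that the unique involution $b^{n/2}$ of $G$ invert some $p$-element forces $m$ odd and $n = 2t$ with $t$ odd. Splitting $\bZ/m = \bZ/m_1 \times \bZ/m_2$ by the $\pm 1$ eigenspaces of $r$, and setting $\tau := b^{t}$ and $c := b^{2}$, one checks that $\tau$ is an involution inverting the $\bZ/m_1$ factor and centralizing the $\bZ/m_2$ factor while $c$ is central of order $t$; this gives the direct product decomposition $G \cong D_{2m_1} \times \bZ/(m_2 t)$ with $(2m_1, m_2 t) = 1$.

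Since $m_1 \geq 3$ is odd, $D_{2m_1}$ has trivial center, so $Z(G) = \bZ/(m_2 t) =: C$ and every central cyclic subgroup $H \leq G$ lies in $C$. Hence $G/H \cong D_{2m_1} \times (C/H)$ is again dihedral times cyclic of coprime orders, and it suffices to produce the required action for groups of this form. Set $\widetilde{G} := Q(4m_1) \times (C/H)$, where $Q(4m_1)$ is the binary dihedral group double-covering $D_{2m_1}$. Then $\widetilde{G}$ has period four and satisfies Milnor's $2q$-condition for every odd prime $q$: the unique involution of $\widetilde{G}$ is the central $\bZ/2 \subset Q(4m_1)$, which commutes with any $q$-element (lying either in the cyclic subgroup of $Q(4m_1)$ or in $C/H$), so the subgroup they generate is cyclic of order $2q$. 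As a product of a binary dihedral group with a cyclic group of odd coprime order, $\widetilde{G}$ lies on Hopf's list and hence acts freely and orthogonally on $S^3$. Passing to the quotient by the central $\bZ/2$ yields a free action of $G/H$ on $\RP^3 = S^3/(\bZ/2)$. Since $H_1(\RP^3;\bZ) = \bZ/2$ has trivial automorphism group, this action is automatically homologically trivial, and $\RP^3$ is a rational homology 3-sphere.

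The main obstacle is the explicit structural decomposition $G \cong D_{2m_1} \times \bZ/(m_2 t)$. Once the Sylow 2-subgroup of $G$ has been shown to be cyclic, what remains is a careful unwinding of the period-four condition on the metacyclic presentation to extract the cyclic central direct factor using the CRT decomposition of $\bZ/m$ by eigenvalues of $r$. The quotient and existence steps are then formal consequences of the classical membership of $Q(4m_1) \times \bZ/N$ on Hopf's list.
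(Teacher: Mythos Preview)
Your argument is essentially correct and reaches the same conclusion as the paper, but by a different route. The paper invokes Wall's structural classification of periodic groups, expressing $G$ as an extension $1\to G_0\to G\to G_1\to 1$ with $G_0$ of odd order and $G_1$ drawn from an explicit list; it then observes that every $G_1$ other than $C(2^k)$ has a unique (hence central) involution, forcing $G_1=C(2^k)$, and that faithfulness of the action together with period four pins down the dihedral-times-cyclic form. You instead argue directly from the Sylow structure: rule out generalized quaternion $\Syl_2(G)$, invoke the Zassenhaus metacyclic presentation for $Z$-groups, use the period-four constraint $r^2\equiv 1$ to split $\bZ/m$ into $\pm 1$-eigenspaces, and read off $G\cong D_{2m_1}\times \bZ/(m_2t)$. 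Your approach is more elementary---it avoids citing Wall's classification---at the cost of more explicit computation. Your existence argument, realizing $G/H$ on $\RP^3$ via the free orthogonal $S^3$-action of $Q(4m_1)\times(C/H)$ and quotienting by the central $\bZ/2$, is also more explicit than the paper's terse reference to ``quotients of a free action of a binary dihedral group $Q(4n)$ on $S^3$.''

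One step needs more care. You assert that if $\Syl_2(G)\cong Q_{2^a}$ then $G$ has a unique involution, justifying this only by ``conjugation permutes involutions.'' That parenthetical shows merely that a unique involution would be central; it does not establish uniqueness, and indeed the claim fails for non-periodic groups (e.g.\ $(\bZ/3)^2\rtimes Q_8$ with $Q_8\hookrightarrow SL_2(\bF_3)$ has nine involutions and quaternion Sylow $2$). The claim \emph{is} true under your period-four hypothesis, but you should say why: $O(G)$ is then cyclic (odd-order subgroups of period $\leq 4$ are cyclic), so the conjugation map $P\to\Aut(O(G))$ lands in an abelian group and kills $z\in[P,P]$; since $z$ is already central modulo $O(G)$, a two-line calculation gives $z\in Z(G)$. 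Similarly, the phrase ``the unique involution $b^{n/2}$'' is imprecise once you land in the case $n=2t$ with $t$ odd (where there are $m_1$ involutions), though the intended case analysis---eliminating $m$ even and $4\mid n$ because the sole involution is then central---is valid.
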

\begin{proof}  This follows by checking the list of periodic groups, taking into account our period four assumption. A convenient reference is Wall \cite[Theorem 4.5]{Wall:2013}, which states that $G$ is an extension of a normal subgroup $G_0$ of odd order by a group $G_1$ isomorphic to one of the form $C(2^k)$, $Q(2^k)$, $T_v^*$, $O_v^*$, $SL_2(p)$ or $TL_2(p)$. The periods of these groups are listed in \cite[Corollary 5.6]{Wall:2013}. In our case, $G_0$ must be cyclic (the only odd order group with period $\leq 4$), and $G_1 = C(2^k)$ since there is an unique element of order two in the other cases. The action of $G_1$ on $G_0$ must be faithful to violate the $2p$ condition, and $G$ of period four implies the claimed structure for $G$. 

Since any quotient of $G$ by a central subgroup is again of the same form,  the required actions arise by quotients of a free action on $\bS^3$ by products of a binary dihedral group $Q(4n)$, for $n$ odd, with a cyclic group of coprime order.
\end{proof}

Milnor \cite[\S 3]{milnor2} listed the period four groups which do satisfy the $2p$ conditions, and identified two families of such group which (by Perelman \cite{Lott:2007})
can not act freely on $\bS^3$. These are:
\begin{enumerate}
\item $Q(8n, k, l)$, with  $n>k>l \geq 1$, and $8n$, $k$, $l$ pairwise relatively prime;
\item $O(48, k,l)$, with $l$ odd, $3 \nmid l$ and 48, $k$, $l$ pairwise relatively prime.
\end{enumerate}

One can also take the product of any one of these groups with a cyclic group $C(r)$ of relatively prime order. We will refer  to those listed in (i) as \emph{type A} if $n$ is odd, or \emph{type B} if $n\geq 2$ is even,  and  those in (ii) as  \emph{type C} for $O(48,k,l)$. The groups of type B or C have orders divisible by 16. The direct product of one of these groups with a cyclic group of coprime order will be called a \emph{generalized} type A, B or C periodic group.
From the presentations given in \cite[\S 3]{milnor2}, one can check that the groups of type A, B, or C all have quaternion $2$-Sylow subgroup, and contain no non-trivial central subgroups of odd order. 

\medskip
The following is a useful observation.
\begin{lemma}\label{lem:odd} Let $G$ be a period four group with quaternion $2$-Sylow subgroup. If $G$ acts freely and homologically trivially on a rational homology $3$-sphere $M$,  then $H_1(M;\bZ)$ has odd order.
\end{lemma}
\begin{proof}
From diagram \eqref{diag:twofive}, we have a short exact sequence:
$$1 \to H_1(M;\bZ) \to Q\to G \to 1. $$
Note that $S=Syl_2(G)$ is a quaternion group. Now the restricted sequence 
from Corollary \ref{long-exact-sequence} with $i=2$ gives an injection 
$$ 0 \to H^1(S;H_1(M;\bZ)) \to \wH^4(S;\bZ) \to \wH^0(S;\bZ).$$
Since $S$ has period four, $\wH^4(S;\bZ) = \cy{|S|}$, whereas the group $H^1(S;H_1(M;\bZ))$ contains 
$\cy{2} \times \cy{2}$ unless $H_1(M;\bZ)$ is of odd order.
\end{proof}

We have a sharper result for the groups in Milnor's list.
\begin{proposition}\label{prop:four.5}
Suppose that $G$ is a period four group of type \textup{A}, \textup{B} or \textup{C}. If $G$ acts freely and homologically trivially on a rational homology $3$-sphere $M$,  then we have  $(|H_1(M;\bZ)|, |G|) =1$.
\end{proposition}
\begin{proof} 
 We start again from the short exact sequence in diagram \eqref{diag:twofive}:
$$1 \to H_1(M;\bZ) \to Q\to G \to 1. $$
Consider the pushout sequence from Theorem \ref{thm:threethree}:
$$1 \to H_1(M;\bZ)_{(\pi)} \to Q_\pi \to G \to 1,$$
and note that $Q_\pi$ is a period four group by Proposition \ref{prop:threefour}.

Since $2 \notin \pi$, it follows that $Q_\pi$ is  an extension of $G$ by a central cyclic subgroup of odd order.   Since $G$ has type A, B or C,  the group $G$ contains no non-trivial central subgroups of odd order. It follows that 
 $H_1(M;\bZ)_{(\pi)}=0$, and 
hence $Q_\pi = G$.
In particular, this implies that $H_1(M;\bZ)$ has order relatively prime to $|G|$.
\end{proof}

We need one more observation about the structure of the groups in Milnor's list.
\begin{lemma}\label{lem:quotient}
Let $G$ be a non-periodic quotient of a generalized type \textup{B} or \textup{C} period four group by a central cyclic subgroup. Then $G = C(k) \times G_1$, where $G_1$ is the 
quotient of a type \textup{B} or \textup{C} period four group by a central cyclic subgroup, and $k$ is coprime to the order of $G_1$.
\end{lemma}
\begin{proof}
Suppose that $G$ is a non-periodic quotient of a generalized type B or C group by a central cyclic subgroup  $C(s)$. We have an exact sequence of the form
 $$1 \to C(s)\to C(r)  \times Q_1 \to G  \to 1$$
 where $Q_1$ has type B or C, and $r$ has (odd) order coprime to $|Q_1|$. Since $Q_1$ has a unique (central) element of order two and $G$ is non-periodic, $s = 2s_1$, where $s_1$ is odd.  Since $Q_1$ has no non-trivial central subgroups of odd order, we have $r = ks_1$ and $G = C(k) \times G_1$, where $G_1$ is a non-periodic quotient of $Q_1$. 
\end{proof}
These results allow us to rule out the groups of (generalized) types B and C. Note that by Lemma \ref{lem:odd},  if such a group  acts freely and homologically trivially on a rational homology $3$-sphere $M$, then $M$ must be a $\bZ_{(2)}$-homology sphere.
Theorem B follows from the following result.

\begin{proposition}\label{prop:four.7}
 Let $G$ be the quotient of a (generalized) type \textup{B} or \textup{C} period four group by a central cyclic subgroup. Then 
$G$ can not act freely and homologically trivially on a rational homology $3$-sphere.
\end{proposition}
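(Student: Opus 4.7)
The plan is to show that a free homologically trivial action of $G = \tilde Q/\tilde H$ on a rational homology $3$-sphere $M$ would force $\tilde Q$ itself to appear as a finite quotient of the $3$-manifold group $\pi_1(M/G)$, which via Perelman's Geometrization Theorem would make $\tilde Q$ act freely by isometries on a closed Thurston-geometric $3$-manifold---contradicting the $2$-Sylow structure of type B or C groups. To set up, note that the center of any type B or C group is cyclic of order $2$, so the hypothesis forces $\tilde H \in \{1, \cy 2\}$.

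Suppose such an action exists. Theorem A gives a central extension $1 \to H \to Q \to G \to 1$ with $Q$ periodic of period $4$ and $H = H_1(M;\bZ)_{(\pi)}$ cyclic. The first goal is to show $\tilde Q$ is a quotient of $\pi_1(M/G)$. If $\tilde H = 1$, this is immediate from $\pi_1(M/G) \twoheadrightarrow G = \tilde Q$. If $\tilde H = \cy 2$, then $\Syl_2(G) \cong \Syl_2(\tilde Q)/\cy 2$ is dihedral of order at least $8$; the Sylow corollary of Section 3 applied to the assumed action forces $|H_1(M;\bZ)|$ to be even, and the requirement that $\Syl_2(Q)$ be cyclic or generalized quaternion together with the central surjection $\Syl_2(Q) \twoheadrightarrow \Syl_2(G)$ forces $\Syl_2(Q) \cong \Syl_2(\tilde Q)$ with $\Syl_2(H) = \cy 2$. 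Both $\tilde Q$ and the quotient $Q/H_{\mathrm{odd}}$ (by the odd part of $H$) are then central $\cy 2$-extensions of $G$ whose restrictions to $\Syl_2(G)$ realize the unique generalized-quaternion central $\cy 2$-extension class of the dihedral group. Since restriction to a Sylow $2$-subgroup is injective on $H^2(-;\cy 2)$ (the index $[G : \Syl_2(G)]$ being odd), these classes agree in $H^2(G;\cy 2)$, giving $\tilde Q \cong Q/H_{\mathrm{odd}}$ as central extensions of $G$. Since $Q$ is by construction a quotient of $\pi_1(M/G)$, so is $\tilde Q$.

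The kernel of $\pi_1(M/G) \twoheadrightarrow \tilde Q$ then cuts out a finite regular covering $X \to M/G$ by a closed orientable $3$-manifold on which $\tilde Q$ acts freely as the deck group. Combining Perelman's Geometrization Theorem with the theorem that every smooth finite group action on a closed orientable $3$-manifold is equivariantly conjugate to an isometric action for some Thurston geometry, this action is by isometries for a geometric structure on $X$. But $\Syl_2(\tilde Q)$ is generalized quaternion of order at least $16$, and such a group does not embed in the finite isometry group of any closed Thurston-geometric $3$-manifold: in the spherical case this reduces (via the lift to $S^3$) to Hopf's classification of free $O(4)$-actions on $S^3$, which excludes type B and C by hypothesis, while in every other Thurston geometry the finite subgroups of the isometry group have Sylow $2$-subgroups of order at most $8$. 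This contradicts the existence of the assumed action.

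The main obstacle is the algebraic identification $\tilde Q \cong Q/H_{\mathrm{odd}}$ in the case $\tilde H = \cy 2$: one must pin down the generalized-quaternion central $\cy 2$-extension of the dihedral group $\Syl_2(G)$ as a distinguished class in the relevant cohomology group, and exploit injectivity of restriction to a Sylow $2$-subgroup to globalize the identification from $\Syl_2(G)$ to $G$. Once this algebraic step is in hand, the geometric conclusion via Perelman--Thurston and the classification of finite isometry groups of geometric $3$-manifolds is essentially routine.
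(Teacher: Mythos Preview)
Your argument contains a fatal error in the final geometric step. You assert that a generalized quaternion group of order at least $16$ cannot act freely by isometries on any closed Thurston-geometric $3$-manifold other than a spherical one, and in particular that ``in every other Thurston geometry the finite subgroups of the isometry group have Sylow $2$-subgroups of order at most $8$.'' This is false. The Cooper--Long theorem cited at the very start of the paper shows that \emph{every} finite group---in particular every type B or C group $\tilde Q$---acts freely on some closed hyperbolic $3$-manifold, hence by isometries of the hyperbolic metric. So the covering $X \to M/G$ you construct, even if $X/\tilde Q$ happens to be hyperbolic, gives no contradiction whatsoever. (There is also a gap one step earlier: $X/\tilde Q = M/G$ need not carry a single Thurston geometry, only a geometric decomposition, so the passage from Geometrization to ``the $\tilde Q$-action is isometric for some geometry on $X$'' is unjustified.)

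The obstruction the paper actually uses is not geometric but cohomological: Ronnie Lee's semicharacteristic theory shows that a type B or C group cannot act freely on any $\bZ_{(2)}$-homology $3$-sphere. When $G$ is itself type B or C, Proposition~\ref{prop:fourfour} forces $H_1(M;\bZ)=\cy d$ with $d$ odd, so $M$ is a $\bZ_{(2)}$-homology sphere and Lee's result applies directly. When $G$ is a proper (non-periodic) quotient, the paper pushes the extension $Q$ of Theorem~A down to a $\cy 2$-extension $Q'$ of $G$ which is again type B or C, realizes $Q'$ as the deck group of a $2$-fold cover $M' \to M$ composed with $M \to M/G$, and then checks via the transfer sequence that $M'$ is a $\bZ_{(2)}$-homology sphere; Lee's obstruction again yields the contradiction. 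Your algebraic identification $\tilde Q \cong Q/H_{\mathrm{odd}}$ is in the spirit of this reduction and could be repaired to feed into Lee's result, but as written it leads nowhere because the contradiction you aim for does not exist.
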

\begin{proof} 
The period four groups of (generalized) type B or C themselves can not act freely on any
 $\bZ_{(2)}$-homology $3$-sphere (see Ronnie Lee \cite[Corollary 4.15, Corollary 4.17]{lee1}), so they are ruled out by  Lemma \ref{lem:odd}.

 By Lemma  \ref{lem:quotient}, and restricting the action if necessary, we may assume that $G$ is the quotient of a type B or C group by a central cyclic subgroup.

 Now suppose that  $G$  acts freely and homologically trivially on a rational homology $3$-sphere $M$. 
   We then have a covering space $M \to M/G \to BG$.
 From \eqref{diag:twofive}, we have an
 extension $1 \to H_1(M;\bZ) \to Q \to G \to 1$. As above, we consider
 the pushout $1 \to H_1(M;\bZ)_{(\pi)}\to Q_\pi \to G \to 1$,
 where $Q_\pi$ is a period four group and $H_1(M;\bZ)_{(\pi)}$ is a central cyclic subgroup of $Q_\pi$.

  Since $G$ is non-periodic, $2 \in \pi$ and
 $H_1(M;\bZ)$ has order $2d$, where $d$ is odd.  
 It follows from the explicit presentations in \cite[\S 3]{milnor2}  that $Q_\pi$ must again be of type B or C, and $H_1(M;\bZ)_{(\pi)}$ must contain the unique central subgroup 
 $T = C(2)$ of order two in $Q_\pi$.

 The group $Q_\pi$ is constructed by a pushout from $\pi_1(M)$, and we can  form a further pushout over the projection $H_1(M;\bZ) \to T= C(2)$ to obtain the group extension
 $$1 \to T \to Q' \to G \to 1$$
 in which $Q'$ is again a period four group of type B or C. The $2$-fold covering $M' \to M$ given by the quotient $\pi_1(M) \to T$, followed by the $G$-covering 
 $M \to M/G$, is just the $Q'$-covering $M' \to M/G$.

 To obtain a contradiction, we will now show that $M'$ is a $\bZ_{(2)}$-homology sphere. 
 From the structure of $M' \to M$ as a $2$-fold covering, we have an exact sequence
$$ 0 \to H_0(C(2);H_1(M';\bZ)) \to H_1(M;\bZ) \to \cy 2 \to 0$$
and $|H_1(M;\bZ)| = 2d$, with $d$ odd. Hence the co-invariants 
$H_0(C(2);H_1(M';\bZ))$ are of odd order, and 
 $H_1(M';\bZ)$ has no $2$-torsion. We have an  exact sequence of $C(2)$-modules involving the torsion subgroup and the torsion-free quotient:
 $$0 \to H_1(M';\bZ)_{tors} \to H_1(M';\bZ) \to \bZ^r \to 0$$
 and by applying group homology $H_*(C(2); -)$ to the sequence, we see that $H_0(C(2); \bZ^r)$ is a direct sum of $r$ cyclic groups (each one either $\bZ$ or $\cy 2$,  depending on whether the module action of $C(2)$ on  each summand of $\bZ^r$ is trivial or non-trivial), and conclude that $r=0$. Hence
 $H_1(M';\bZ)$ is all odd torsion. In other words, $M'$ is a $\bZ_{(2)}$-homology $3$-sphere and the free $Q'$-action can not exist.
\end{proof}

The remaining existence question concerns central quotients of the period four groups of type A.  It is enough to consider the period four groups themselves.

\begin{proposition}\label{prop:typeA}
Let $G$ be the quotient of a type \textup{A} period four group $Q$ by a central cyclic subgroup $T\leq Q$. 
If $Q$ acts freely and homologically trivially
on a rational homology $3$-sphere $M$,  then $G$ acts freely and homologically trivially on
 $M/T$, which is again a rational homology $3$-sphere. 
 \end{proposition}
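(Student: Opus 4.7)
The plan is to verify three things in turn: \emph{(a)} $M/T$ is a closed oriented $3$--manifold with the rational homology of $S^3$; \emph{(b)} the induced action of $G = Q/T$ on $M/T$ is free; and \emph{(c)} this action is homologically trivial. Parts (a) and (b) are quick. Since $Q$ acts trivially on $H_3(M;\bbZ)=\bbZ$ it preserves orientation, and hence so does $T$, making $M/T$ a closed oriented $3$--manifold. The transfer argument with rational coefficients gives $H_\ast(M/T;\bbQ)\cong H_\ast(M;\bbQ)^T = H_\ast(M;\bbQ)$, since $T$ acts trivially on $H_\ast(M;\bbZ)$. Freeness of the $G$--action is formal: if $qT\in G$ fixes $[m]\in M/T$, then $qm=tm$ for some $t\in T$, forcing $t^{-1}q=1$ by freeness of $Q$ on $M$, so $q\in T$.

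For (c), the key tool is the Lyndon--Hochschild--Serre five-term sequence applied to
$$1\to \pi_1(M)\to \pi_1(M/T)\to T\to 1.$$
Since $T$ is cyclic, $H_2(T;\bbZ)=0$, and the sequence collapses to
$$0 \to H_1(M;\bbZ)_T \to H_1(M/T;\bbZ) \to T \to 0,$$
in which the $T$--coinvariants simplify to $H_1(M;\bbZ)$ because $T\leq Q$ acts trivially on $H_1(M;\bbZ)$. Since the $G$--action on $M/T$ lifts to the action of $Q$ on $M$, the cover $M\to M/T$ is equivariant (with $Q$ acting on the base through $G$), so naturality of the five-term sequence gives compatible $G$--actions on all three terms. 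The action on $H_1(M;\bbZ)$ is the restriction of the trivial $Q$--action, and the action on the quotient $T$ is the conjugation action of $Q$ on its central subgroup $T$; both are trivial.

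To conclude, I would invoke the standard fact that endomorphisms of an abelian extension $0\to A\to B\to C\to 0$ restricting to the identity on $A$ and inducing the identity on $C$ are classified by $\Hom(C,A)$, via $b\mapsto b+\varphi(\pi(b))$. Applying this with $A = H_1(M;\bbZ)$ and $C=T$, the proof reduces to showing $\Hom(T, H_1(M;\bbZ))=0$. This is immediate from Proposition \ref{prop:fourfour}, which gives that the order of $H_1(M;\bbZ)$ is coprime to $|Q|$, hence to $|T|$. The main subtlety is the identification of the $G$--action on the end terms of the short exact sequence with the trivial $Q$--action on $H_1(M;\bbZ)$ and with the conjugation action on the central subgroup $T\leq Q$; once these identifications are in place, the coprimality consequence of Proposition \ref{prop:fourfour} delivers the result.
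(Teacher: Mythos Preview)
Your proof is correct and follows essentially the same approach as the paper: both derive the short exact sequence $0 \to H_1(M;\bZ) \to H_1(M/T;\bZ) \to T \to 0$ from the five-term sequence (using $H_2(T;\bZ)=0$ and the trivial $T$-action), observe that $Q$ acts trivially on the outer terms, and then invoke Proposition~\ref{prop:fourfour} for the coprimality $(|H_1(M;\bZ)|,\,|Q|)=1$. The only cosmetic difference is in the last step: the paper applies the invariants exact sequence and uses $H^1(Q;H_1(M;\bZ))=0$ to conclude $H_1(M/T;\bZ)^Q = H_1(M/T;\bZ)$, whereas you classify automorphisms of the extension fixing the ends by $\Hom(T,H_1(M;\bZ))=0$.
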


\begin{proof}
Let $G$ be the quotient of a type \textup{A} period four group $Q$ by a central cyclic subgroup $T\leq Q$.
If $Q$ acts freely and homologically trivially
on a rational homology $3$-sphere $M$,  then $G$ acts freely on
the rational homology $3$-sphere $M/T$. 

It remains to show that the $G$-action on $M/T$ is homologically trivial. Since $T$ is central, the covering space $M \to M/T$ is preserved by $Q$, and we have an exact sequence:
$$ 0 \to H_1(M;\bZ) \to H_1(M/T;\bZ) \to H_1(T;\bZ) \to 0$$
since $H_2(T;\bZ) = 0$ and $T$ acts homologically trivially on $M$. Looking at the associated long exact
sequence in group cohomology and noting that $Q$ acts trivially on both $H_1(M; \bZ)$ and $H_1(T; \bZ)$, 
we obtain:
$$ 0 \to H_1(M;\bZ) \to H_1(M/T;\bZ)^Q \to H_1(T;\bZ) \to H^1(Q; H_1(M;\bZ))\ .
$$
However, by Proposition \ref{prop:four.5} we have
$(|H_1(M;\bZ) |, |Q|) = 1$, and hence $H^1(Q; H_1(M;\bZ)) =0$, Therefore $H_1(M/T;\bZ)^Q = H_1(M/T;\bZ)$, and the $G$-action on $M/T$ is homologically trivial.
\end{proof}

The period four groups $G = Q(8n,k,l)$ of type A can not act freely on $\bS^3$ (by Perelman), but some members of this family do act freely on integral homology $3$-spheres. For the existence of such actions, there are two obstacles: a finiteness obstruction and a surgery obstruction.
Swan \cite{swan1} showed that  for every period four group, there exists a finitely dominated Poincar\'e $3$-complex $X$ with $\pi_1(X) = G$  and universal covering $\widetilde X \simeq \bS^3$. Such  a complex is called a \emph{Swan complex} of type $(G,3)$. 

We recall that the homotopy types of $(G,3)$-complexes are in bijection (via the first $k$-invariant) with the invertible elements in $\widehat H^4(G;\bZ) \cong \cy{|G|}$. 

\begin{lemma}\label{lem:four.9} Let $G$ be a period four group which acts freely and homologically trivially on a rational homology $3$-sphere $M$. Then there exists a $(G,3)$-complex $X$, unique up to homotopy, and a degree 1 map $f\colon M/G \to X$ compatible with the classifying maps of the  $G$-fold coverings. 
\end{lemma}
\begin{proof}
The classifying map $c\colon M/G \to BG$ of the covering $M \to M/G$ gives a class $c_*[M] \in H_3(G,\bZ)$. By Proposition \ref{prop:fundclass}, this class corresponds to a generator 
$$\sigma^* \in \wH^4(G;\bZ) \cong \wH^{-4}(G;\bZ) \cong H_3(G;\bZ). $$
  Let $X$ be the $(G,3)$-complex defined (up to homotopy) by this $k$-invariant. Since the classifying map $c\colon M/G \to BG$ is surjective on fundamental groups, it follows that $c$ lifts to a map $f\colon M/G \to X$. Since the images of the fundamental classes of $M/G$ and $X$ agree in $H_3(G;\bZ)$, it follows that $f$ has degree 1. 
\end{proof}

\begin{remark}
Any degree 1 map $f\colon N \to X$ from a closed oriented $3$-manifold to a $(G,3)$-complex provides a degree 1 normal map by pulling-back a framing of the trivial bundle over $X$.
\end{remark}\
The Wall finiteness obstruction $\sigma(X) \in  \widetilde K_0(\bZ G)$ vanishes if and only if there exists a finite Swan complex of type $(G,3)$. This is the first obstruction to existence.
By varying the homotopy type of $X$, Swan defined
an invariant 
$\sigma(G) \in \widetilde K_0(\bZ G) / S(G)$,
depending only on $G$, 
where $S(G) \subseteq \widetilde K_0(\bZ G) $ is the \emph{Swan subgroup} generated by projective ideals of the form $\la r, \Sigma\ra \subset \bZ G$, where $(r, |G|) = 1$ and $\Sigma$ denotes the norm element. Then $\sigma(G) = 0$ if and only if $\sigma(X) \in S(G)$ for every Swan complex $X$ of type $(G,3)$.

\begin{proposition}\label{prop:finiteness}
 Let $G = Q(8n, k, l)$, with $n$ odd, be a period four group of type \textup{A}. If $G$ acts freely and homologically trivially on a rational homology $3$-sphere, then $\sigma(G) =0$.
\end{proposition}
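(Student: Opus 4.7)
The plan is to compute the finiteness obstruction $\sigma(X)$ of the Swan complex $X$ from Lemma~\ref{lem:fourfive} at the chain level, using the degree $1$ map $f\colon M/G\to X$ as the input. First I would replace $X$ by a finitely dominated CW model, so that the cellular chain complex $C_*(\widetilde X)$ of its universal cover is a finite complex of finitely generated projective $\bZ G$-modules with $H_*(\widetilde X;\bZ)\cong H_*(S^3;\bZ)$, and whose projective class in $\widetilde K_0(\bZ G)$ is $\sigma(X)$. The complex $C_*(M)$, viewed via the free $G$-action, is a finite complex of finitely generated free $\bZ G$-modules with $H_*(M;\bZ)=(\bZ,\cy d,0,\bZ)$, where by Proposition~\ref{prop:fourfour} the integer $d=|H_1(M;\bZ)|$ satisfies $(d,|G|)=1$. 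The $G$-equivariant lift $\widetilde f\colon M\to\widetilde X$ of $f$ induces a $\bZ G$-linear chain map $f_*\colon C_*(M)\to C_*(\widetilde X)$.

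Next I would form the mapping cone $C:=\mathrm{Cone}(f_*)$, which is a finite complex of finitely generated projective $\bZ G$-modules. Because $f$ has degree $1$ and both spaces are connected, $\widetilde f_*$ is an isomorphism on $H_0$ and $H_3$; the long exact homology sequence then forces the only nonzero homology of $C$ to be $H_2(C)\cong\cy d$, concentrated in degree $2$. Since $(d,|G|)=1$, the trivial $\bZ G$-module $\cy d$ is cohomologically trivial and therefore admits a finite projective resolution, giving a well-defined class $[\cy d]\in K_0(\bZ G)$. The standard Euler-characteristic identity for perfect complexes then yields $\chi(C)=[\cy d]$ in $K_0(\bZ G)$.

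On the other hand, $\chi(C)=\chi(C_*(\widetilde X))-\chi(C_*(M))$ in $K_0(\bZ G)$. The second term is an integer multiple of $[\bZ G]$, since $C_*(M)$ is free and $\chi(M/G)=0$, so its image in $\widetilde K_0(\bZ G)$ vanishes; the image of the first term is by definition $\sigma(X)$. Combining the two computations gives $\sigma(X)=[\cy d]$ in $\widetilde K_0(\bZ G)$. The final step invokes Swan's theorem~\cite{Swan:1960}: for any integer $r$ with $(r,|G|)=1$, the class $[\cy r]$ lies in the Swan subgroup $S(G)\subseteq\widetilde K_0(\bZ G)$, being in the image of the Swan homomorphism. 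Applying this with $r=d$ gives $\sigma(X)\in S(G)$, and therefore $\sigma(G)=0$.

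I expect the main obstacle to be the chain-level Euler-characteristic identity $\chi(C)=[\cy d]$ and its link to the Swan subgroup, which together require carefully invoking the standard $K$-theory of perfect complexes over $\bZ G$ and Swan's explicit description of $S(G)$ in terms of the classes $[\cy r]$; the rest of the argument is then essentially formal.
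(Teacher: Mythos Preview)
Your argument is correct and follows essentially the same route as the paper: both produce the degree~$1$ map $f\colon M/G\to X$ from Lemma~\ref{lem:fourfive} and then compare finiteness obstructions, the paper by quoting Mislin~\cite[Theorem 3.3]{Mislin:1976} for the formula $\sigma(M/G)=\sigma(X)+\la d,\Sigma\ra$, while you reprove that identity directly via the mapping-cone Euler characteristic. One small correction: the reference for the Swan subgroup and the identification of $[\cy r]$ with a Swan class is~\cite{swan1} (``Periodic resolutions for finite groups''), not~\cite{Swan:1960}.
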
 
\begin{proof} Under the given assumptions,  $G$ acts freely and homologically trivially on a rational homology $3$-sphere $M$, such that
 $(|H_1(M;\bZ)|, |G|) = 1$. By Lemma \ref{lem:four.9}. the classifying map $M/G \to BG$ of the covering $M \to M/G$ lifts to a degree 1 map $f\colon M/G \to X$, to a uniquely defined  $(G,3)$-complex $X$. Since the map $f$ induces a surjection on fundamental groups, the argument of Mislin \cite[Theorem 3.3]{Mislin:1976} shows that $\sigma(M/G) = \sigma(X) + \la d, \Sigma\ra \in \widetilde K_0(\ZG)$, where $d = |H_1(M;\bZ)|$,
 and hence $\sigma(X) \in S(G)$. Since varying the homotopy type of $X$ changes $\sigma(X)$ only by an element of the Swan subgroup (see Swan \cite[Lemma 7.3]{swan1}), we conclude that $\sigma(G) = 0$.
\end{proof}

The secondary obstruction comes from surgery theory (and is defined only if the finiteness obstruction is zero). It can be computed in some cases to show  existence (see Madsen \cite{madsen2}). 
 For the type A groups,  
 a  $(G,3)$-complex $X$ has \emph{almost linear} $k$-invariant $e_0 \in H^4(G;\bZ)$ if the restriction of $e_0$ to each Sylow subgroup of $G$ is the $k$-invariant of a standard free orthogonal action on $\bS^3$ (see \cite[p.~195]{madsen2}). 
 
 \begin{definition} We will say that a free homologically trivial action of  a type A group $G$ on a rational homology $3$-sphere $M$ has \emph{almost linear $k$-invariant} if there exists a degree 1 map $f\colon M/G \to X$ to a  \emph{finite} $(G,3)$-complex with almost linear $k$-invariant $e_0 \in H^4(G;\bZ)$.
\end{definition}
 \begin{remark}\label{rem:ZHS}
 If $G$ acts freely and smoothly on an integral homology $3$-sphere $\Sigma$, then the quotient manifold $\Sigma/G = X$ is a finite $(G,3)$-complex with almost linear $k$-invariant (see \cite[Corollary C]{hmadsen3} and the discussion of \cite[Conjecture D]{madsen2}). By Proposition \ref{prop:typeA}, any quotient of such a group $G$ by a central cyclic subgroup would act freely and homologically trivially on a rational homology $3$-sphere.
 \end{remark}

Conversely, we expect that the following existence statement holds:
\begin{conjecture} Let $G = Q(8n,k,l)$, with $n$ odd,  be a period four group of type \textup{A}. Then $G$ acts freely and homologically trivially on a rational homology $3$-sphere with almost linear $k$-invariant if and only if $G$ acts freely on an integral homology $3$-sphere.
\end{conjecture}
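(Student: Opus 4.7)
The forward implication is immediate from Remark \ref{rem:ZHS}. If $G$ acts freely and smoothly on an integral homology $3$-sphere $\Sigma$, take $M = \Sigma$: the action is automatically homologically trivial (since $H_1(M;\bZ) = 0$), $M$ is a rational homology $3$-sphere, and $M/G$ is a finite $(G,3)$-complex $X$ with almost linear $k$-invariant by the remark; the identity map $M/G \to X$ serves as the required degree 1 map.

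For the reverse implication, suppose $G$ acts freely and homologically trivially on a rational homology $3$-sphere $M$ with almost linear $k$-invariant, so there is a degree 1 map $f \colon M/G \to X$ to a finite $(G,3)$-complex $X$ with almost linear $k$-invariant $e_0 \in H^4(G;\bZ)$. By Proposition \ref{prop:fourfour}, $H_1(M;\bZ) = \cy d$ with $(d,|G|) = 1$, and Proposition \ref{prop:finiteness} gives $\sigma(G) = 0$, so $\sigma(X) \in S(G)$. The plan is to view $f$ as a degree 1 normal map --- its stable normal bundle pulls back from the trivial framing on $X$ noted just before Lemma \ref{lem:fourfive} --- and to study the surgery obstruction $\sigma(f) \in L_3^s(\bZ G)$, with the aim of identifying $\sigma(f)$ with the obstruction analyzed by Madsen \cite{madsen2} for the existence of a free $Q(8n,k,l)$-action on an integral homology $3$-sphere with almost linear $k$-invariant.

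The key technical step will be to exploit the Madsen-Thomas-Wall decomposition of $L_3^s(\bZ G)$ into odd-primary and $2$-primary components. For the type A groups $Q(8n,k,l)$ with $n$ odd, the odd-primary part of $\sigma(f)$ is controlled by multi-signatures of Sylow subgroups; the almost linear $k$-invariant hypothesis forces each Sylow-$p$ restriction of $f$ to match a standard free orthogonal model on $S^3$, which makes these multi-signatures vanish. The $2$-primary part of $\sigma(f)$ is detected by Atiyah-Patodi-Singer $\rho$-invariants of the free $G$-action on $M$ twisted by unitary representations of $G$. The plan would then be to use the equivariant surgery framework of Hambleton-Madsen-Milgram \cite{hmadsen3} to show that these $\rho$-invariants coincide with those of a Madsen-type orthogonal model on the Sylow-$2$ subgroup; combined with $\sigma(G) = 0$, this would produce a normal cobordism from $f$ to the orbit map of a free action on an integral homology $3$-sphere.

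The main obstacle will be precisely this $2$-primary identification of $\rho$-invariants. For the type A groups $Q(8n,k,l)$ the Sylow $2$-subgroup is generalized quaternion, and the $2$-primary surgery theory is delicate; two distinct almost linear $k$-invariants can yield genuinely different surgery obstructions, so not every degree $1$ normal map into an almost linear $(G,3)$-complex can be surgered to the quotient of a free action on an integral homology sphere. The hope is that by varying $X$ within its homotopy type, using the freedom given by $\sigma(G) = 0$ and the action of the Swan subgroup on $k$-invariants, one can arrange for the $2$-primary obstruction to match Madsen's existence criterion. Carrying this out seems to require a careful Atiyah-Singer defect computation for the $G$-cover $M \to M/G$, together with a matching analysis of the Atiyah-Patodi-Singer invariants of the conjectural integral homology $3$-sphere action; the difficulty of this matching is the main reason this result is stated as a conjecture rather than a theorem.
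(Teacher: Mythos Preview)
The statement you are attempting is a \emph{conjecture} in the paper, not a theorem; the paper offers no proof of it. What the paper actually establishes is the special case recorded as Theorem~\ref{thm:existence}: for $G = Q(8p,q)$ with $p,q$ odd primes and with the extra hypothesis $S(pq)=0$, the conjecture holds. So there is no ``paper's own proof'' to compare against for the full statement, and your closing sentence---that the difficulty is ``the main reason this result is stated as a conjecture rather than a theorem''---is exactly right. Everything before that sentence, however, is written as though a proof is being given, which is misleading.

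A couple of specific comments. First, you have the directions reversed: what you call the ``forward implication'' (starting from a free action on an integral homology sphere $\Sigma$) is the $\Leftarrow$ direction. That direction is indeed handled by Remark~\ref{rem:ZHS}, as you say and as the paper also notes in the proof of Theorem~\ref{thm:existence}. Second, your outline for the hard direction (multisignatures on odd Sylows, APS $\rho$-invariants for the $2$-primary part, matching via Hambleton--Madsen) is not the route the paper takes even in the special case it does prove. There the argument runs through Madsen's restriction theorem (Theorem~\ref{thm:res}): one restricts $\lambda^h(f)$ to the subgroup $H = Q(4pq)$, where $X(H)$ can be taken to be an orthogonal space form so that the obstruction is between closed manifolds and hence detected on $C(4)$; this forces $d$ to be a square mod $8ab$, and the vanishing of the top Swan component $S(pq)$ then feeds into Davis's theorem \cite[Theorem~5.1(ii)]{Davis:1988} to kill $\lambda^h(f)$. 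No $\rho$-invariant computation is carried out. Your sketch does not identify the genuine obstruction the paper isolates---namely the need to control the Swan subgroup contribution, which is precisely what the extra hypothesis $S(pq)=0$ supplies and without which the argument is currently blocked.
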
 
   
   In the remainder of this section, we prove this conjecture under some additional assumptions. 
 If $f\colon N \to X$ denotes a degree 1 normal map to a finite $(G,3)$-complex, with $\pi_1(X) = G$, then there is \emph{weakly simple} surgery obstruction $\lambda'(f) \in L'_3(\ZG)$. This is defined since every finite Poincar\'e $3$-complex with finite fundamental group is weakly simple (meaning that its Poincar\'e torsion lies in $SK_1(\ZG)$). We let $\lambda^h(f) \in L_3^h(\ZG)$, the image of $\lambda'(f)$ under the natural map, denote the obstruction to surgery on $f$ up to homotopy equivalence.

Let $H = Q(4ab)$ denote the index two subgroup of $G$, containing the subgroup $C(4) \leq Q(8)$ which acts by inversion on the normal subgroup of order $ab$.

\begin{theorem}[{Madsen \cite{madsen2}}]\label{thm:res}
 Suppose that $G = Q(8n, k, l)$, with $n$ odd, is a period four group of type \textup{A} such that $\sigma(G) = 0$. Let $f\colon N \to X$ be a degree 1 normal map to a finite  $(G,3)$-complex with almost linear $k$-invariant. Then $\lambda'(f) = 0 $ if and only if $\Res_H(\lambda'(f)) =0$ for each subgroup $H\leq G$ of the form $H = Q(4ab)$. Furthermore, $\lambda^h(f) = 0 $ if and only if $\Res_K(\lambda^h(f)) =0$ for each subgroup $K \leq G$ of the form $ K = Q(8a,b)$.
\end{theorem}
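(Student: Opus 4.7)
The strategy is to apply Dress's induction theorem for $L$-theory, then cut the detecting family down using the very restrictive subgroup structure of type A groups, as carried out in Madsen \cite{madsen2}. The surgery obstruction $\lambda'(f)\in L'_3(\bZ G)$ is well-defined because $\sigma(G)=0$ guarantees the existence of a finite $(G,3)$-complex $X$ in the prescribed homotopy type; the hypothesis that $X$ has almost linear $k$-invariant will constrain where $\lambda'(f)$ lives.

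First, I would invoke Dress induction: both $L'_3(\bZ -)$ and $L^h_3(\bZ -)$ are Mackey functors computable from restrictions to the $p$-hyperelementary subgroups of $G$ for all primes $p$. For odd primes $p$, the Sylow $p$-subgroups of $G=Q(8n,k,l)$ with $n$ odd are cyclic, so every $p$-hyperelementary subgroup is metacyclic and already sits inside the canonical index two subgroup of the form $Q(4ab)$; moreover cyclic and odd order subgroups contribute nothing to $L_3'$, so only the non-cyclic ones are genuinely needed. At the prime $2$, the Sylow subgroup is $Q(8)$, and every $2$-hyperelementary subgroup has the shape $Q(2^s)\rtimes C_m$ with $m$ odd, again embedding into a subgroup of the form $Q(4ab)$. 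Combining these, the restriction map $L'_3(\bZ G)\to \prod_H L'_3(\bZ H)$ over the subgroups $H=Q(4ab)$ is injective on the summand of $L'_3(\bZ G)$ that can contain the image of $\lambda'(f)$. The almost linear $k$-invariant hypothesis is what confines $\lambda'(f)$ to this summand, since restriction to each Sylow subgroup gives an essentially linear surgery problem, so the obstruction lies in the quaternionic summand of Wall's decomposition of $L_3(\bbQ G)$.

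Second, for the $L^h$ statement, the detecting family must be enlarged to subgroups $K=Q(8a,b)$. The reason is that the natural map $L'_3\to L^h_3$ has cokernel fed by $SK_1(\bZ G)$-contributions whose characters are not all seen by the subgroups $Q(4ab)$; the $2$-hyperelementary thickenings $Q(8a,b)$ are precisely those that detect the full $2$-primary $SK_1$ and hence see every obstruction to surgery up to ordinary homotopy equivalence. The same Dress induction argument, applied to $L^h$, then reduces vanishing of $\lambda^h(f)$ to vanishing of $\Res_K(\lambda^h(f))$ for each $K=Q(8a,b)\leq G$.

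The main obstacle I foresee is the injectivity of the restriction map on the relevant $L$-theory summand, which is not a formal consequence of Dress induction. It requires the character-theoretic decomposition of $L_3(\bZ G)$ in terms of the simple factors of $\bbQ G$ (types GL, Sp, O in Wall's classification), Madsen's identification of the quaternionic factors that support the surgery obstruction of an almost linear normal map, and his comparison of the resulting Arf and discriminant invariants across the families $\{Q(4ab)\}$ and $\{Q(8a,b)\}$ respectively. Without the almost linear $k$-invariant hypothesis the image of $\lambda'(f)$ could spread into summands on which restriction fails to be injective, so this assumption is doing essential work and is the place where the proof is genuinely delicate.
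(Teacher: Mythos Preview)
The paper does not give a proof of this theorem; it simply records that the statement is a summary of the calculations in Madsen \cite[\S\S 4--5]{madsen2}, pointing to Theorems 4.19, 4.21 and Corollary 5.12 there. So your sketch is being compared against Madsen's actual arguments, and at the level of strategy you are on target: hyperelementary induction for the $L$-theory Mackey functor, followed by a character-theoretic decomposition of $L_3(\bZ G)$ into its type GL/O/Sp pieces, with the almost linear $k$-invariant confining $\lambda'(f)$ to a specific (``top'') component. You also correctly flag that the delicate step is injectivity of restriction on that component, which is not formal.

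There is, however, a genuine slip in your reduction at the prime $2$. A $2$-hyperelementary subgroup of $G=Q(8n,k,l)$ with $n$ odd is of the form (odd cyclic)$\rtimes$($2$-group), and in particular $Q(8)$ itself is one; but $Q(8)$ does \emph{not} sit inside any $H=Q(4ab)$, since those index-two subgroups have $2$-Sylow $C(4)$. So your sentence ``every $2$-hyperelementary subgroup \dots\ embeds into a subgroup of the form $Q(4ab)$'' is false, and with it your mechanism for cutting the detecting family down to the $Q(4ab)$'s collapses. In Madsen's treatment the reduction to the $Q(4ab)$'s for $\lambda'(f)$ is not achieved by containment of hyperelementary subgroups; it comes from the explicit computation of the top component of $L'_3$ and the fact that, under the almost linear hypothesis, the restriction to the $2$-Sylow $Q(8)$ is already a linear problem and contributes nothing. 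Relatedly, your explanation of why $L^h$ requires the larger family $\{Q(8a,b)\}$ is inverted: the issue is not an $SK_1$-fed cokernel of $L'\to L^h$ that demands more detectors, but rather that the weak-simple decoration $L'$ already kills certain $2$-local contributions which survive in $L^h$, so for $L^h$ one genuinely needs subgroups containing $Q(8)$. If you revise the $2$-local step along these lines, your outline will match Madsen's route.
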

\begin{proof} 
This a a summary statement of the calculations in \cite[\S 4-5]{madsen2}. See in particular \cite[Theorems 4.19, 4.21 and Corollary 5.12]{madsen2}.
\end{proof}

For the groups $G = Q(8a,b)=Q(8a,b,1)$, the \emph{top component} $S(ab) \subseteq S(G) \subset \widetilde K_0(\ZG)$  of the Swan subgroup is defined as the the kernel of the restrictions to all odd index subgroups. For example, Bentzen and Madsen \cite[Proposition 4.6]{Bentzen:1983} computed $S(Q(8p,q))$, for $p,q$ odd primes, almost completely, and showed that $S(pq) = 0$ in a many cases (e.g $(p,q) \equiv (\pm 3, \pm 3) \mod 8$;  or $(p,q) \equiv (1,\pm 3) \mod 8$, and $2$ has odd order mod $p$.

\begin{theorem}\label{thm:existence}
Let $G = Q(8p,q)$, for odd primes $p>q$, and assume that $S(pq) = 0$.
Then $G$ acts freely and homologically trivially on a rational homology $3$-sphere with almost linear $k$-invariant if and only if $G$ acts freely on an integral homology $3$-sphere.
  \end{theorem}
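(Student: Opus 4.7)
The backward direction is essentially immediate from Remark \ref{rem:ZHS}. If $G$ acts freely and smoothly on an integral homology $3$-sphere $\Sigma$, then $\Sigma$ is also a rational homology $3$-sphere; the action is automatically homologically trivial since $H_1(\Sigma;\bZ) = 0$; and $\Sigma/G$ is a finite $(G,3)$-complex with almost linear $k$-invariant. Thus $\Sigma$ itself supplies a free homologically trivial $G$-action on a rational homology $3$-sphere with almost linear $k$-invariant (taking the degree $1$ map to be the identity $\Sigma/G \to \Sigma/G$).

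For the forward direction, my plan is to use surgery theory to convert the rational homology $3$-sphere $M$ into an integral homology $3$-sphere carrying a free $G$-action. Lemma \ref{lem:fourfive} together with the assumption of almost linear $k$-invariant produces a degree $1$ map $f\colon M/G \to X$ to a finite $(G,3)$-complex $X$ with almost linear $k$-invariant; finiteness follows from Proposition \ref{prop:finiteness} combined with the hypothesis $S(pq) = 0$, which allows the Wall finiteness obstruction (a priori only well-defined modulo $S(G)$) to be realized as zero in $\widetilde K_0(\bZ G)$. Pulling back a framing of the trivial bundle over $X$ turns $f$ into a degree $1$ normal map with surgery obstruction $\lambda^h(f) \in L_3^h(\bZ G)$. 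Once one shows $\lambda^h(f) = 0$, surgery produces a homotopy equivalence $M' \to X$ from a closed oriented $3$-manifold $M'$, whose universal cover is a $3$-manifold homotopy equivalent to $S^3$, hence an integral homology $3$-sphere admitting a free $G$-action.

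The crux is therefore to prove $\lambda^h(f) = 0$, which I would approach via Madsen's detection result, Theorem \ref{thm:res}: it suffices to verify $\Res_K(\lambda^h(f)) = 0$ for every subgroup $K \leq G$ of the form $K = Q(8a,b)$. For $G = Q(8p,q)$ with $p > q$ odd primes, the proper subgroups of this form are binary dihedral or quaternionic groups appearing in Hopf's list and act freely and orthogonally on $S^3$. For each such $K$, the almost linear condition on the $k$-invariant of $X$ should identify $\Res_K X$ up to homotopy with the linear space form $S^3/K$, so the restricted normal map $\Res_K f\colon M/K \to \Res_K X$ can be compared with the identity on $S^3/K$ (which has trivial surgery obstruction), forcing $\Res_K(\lambda^h(f)) = 0$.

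The main obstacle I anticipate is making these comparisons rigorous, particularly in connection with the hypothesis $S(pq) = 0$. This hypothesis enters twice: once to realize $X$ as a finite complex in the first place, and once again (through the Rothenberg exact sequence) to relate the weakly simple obstruction $\lambda'(f) \in L'_3(\bZ G)$, on which Madsen's computations in \cite[\S 4-5]{madsen2} are naturally performed, with the homotopy obstruction $\lambda^h(f) \in L^h_3(\bZ G)$ detected by Madsen's theorem. Once these technical identifications are made precise, the proper-subgroup restrictions of $\lambda^h(f)$ vanish as sketched above, Theorem \ref{thm:res} implies $\lambda^h(f) = 0$, and the desired free $G$-action on an integral homology $3$-sphere follows.
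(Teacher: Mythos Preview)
Your plan has a genuine gap in the forward direction. You propose to invoke Theorem~\ref{thm:res} and verify $\Res_K(\lambda^h(f))=0$ for the \emph{proper} subgroups $K\leq G$ of the form $Q(8a,b)$, arguing that these are all Hopf space-form groups. But Theorem~\ref{thm:res} requires $\Res_K(\lambda^h(f))=0$ for \emph{every} subgroup $K\leq G$ of the form $Q(8a,b)$, and for $G=Q(8p,q)$ with $p,q$ prime the group $G$ itself is such a subgroup (indeed the only one with both parameters $>1$). So Madsen's detection theorem gives no reduction here: you would be assuming exactly what you are trying to prove. This is also why the hypothesis $S(pq)=0$ does not enter your argument in any essential way beyond the finiteness obstruction; that should already be a warning sign.

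The paper's proof proceeds quite differently. It restricts not to subgroups of the form $Q(8a,b)$ but to the index-two subgroup $H=Q(4pq)$, and uses \cite[Theorem~3.1]{h4} to arrange that the cover $X(H)$ is homotopy equivalent to an orthogonal space form (the almost-linear hypothesis alone, which only controls Sylow restrictions, does not give this). Then $\Res_H(\lambda^h(f))$ is a surgery obstruction between closed manifolds, hence detected on the $2$-Sylow $C(4)$ and therefore zero. From this one extracts that $d=|H_1(M;\bZ)|$ is a square modulo $8pq$, say $d\equiv r^2$. Now the hypothesis $S(pq)=0$ is used in the decisive way you are missing: it forces $\langle r,\Sigma\rangle=0$ in $\widetilde K_0(\bZ G)$, which is exactly the input needed for Davis's theorem \cite[Theorem~5.1(ii)]{Davis:1988} to conclude $\lambda^h(f)=0$ directly, without appealing to Theorem~\ref{thm:res}.
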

  \begin{proof} Remark \ref{rem:ZHS} explains the sufficiency part. For the converse,   suppose that $G = Q(8p,q)$ with $S(pq) = 0$ acts freely and homologically trivially on a rational homology $3$-sphere $M$ with almost linear $k$-invariant.
 Then there exists a finite $(G,3)$-complex $X$ with almost linear $k$-invariant, and a degree 1 normal map $f\colon M/G \to X = X(G)$. 
  by \cite[Theorem 3.1]{h4}, we may assume that the covering space $X(H)$ is homotopy equivalent to an orthogonal spherical space form, for $H = Q(4pq) \leq G$, and that the normal invariant restricts to the normal invariant of an orthogonal spherical space form over the $2$-Sylow covering $X(Q(8))$.
 In particular, since $|H_1(M;\bZ)| = d$ is odd, we must have that
 $\sigma(X(Q(8)) = \la d, \Sigma\ra = 0 \in \widetilde K_0(\bZ Q(8))$.
  Hence $d \equiv \pm 1 \pmod 8$, and $\Res_{Q(8)}(\lambda(f)) = 0 $ by \cite[Theorem 5.1(ii)]{Davis:1988}).
  This information about the normal map $f\colon M/G \to X$ was extracted 
  from the work of Madsen, Thomas and Wall (see \cite{madsen-thomas-wall1,madsen-thomas-wall2}). 
  
  Now we consider the  restriction of the surgery obstruction $\Res_H(\lambda^h(f))  \in L^h_3(\bZ H)$. 
 Since $X(H)$ is homotopy equivalent to an orthogonal space form, 
 $\Res_H(\lambda^h(f)) $ is the surgery obstruction of  a normal map between closed manifolds. 
 Therefore, $\Res_H(\lambda^h(f)) $ is detected by further restriction to the $2$-Sylow subgroup
  $C(4)$, and hence  $\Res_H(\lambda^h(f)) =0$. It follows that $H_1(M;\bZ)$ stably 
  supports a hyperbolic linking form, and hence 
  $$d = |H_1(M;\bZ)| \equiv r^2 \mod (8ab)$$
   is a square. Since $S(pq) = 0$, it follows that 
 $ \la r, \Sigma\ra = 0 \in \widetilde K_0(\ZG)$. Now by \cite[Theorem 5.1(ii)]{Davis:1988} 
 applied to $\lambda^h(f) \in L_3^h(\ZG)$, we see that $\lambda^h(f) =0$.  
 Therefore $G$ acts freely on an integral homology $3$-sphere.
\end{proof}

\begin{remark} By taking full advantage of Madsen's results as summarized in Theorem \ref{thm:res}, we could give 
a statement for the groups $Q(8a,b)$, under the assumption that $S(a',b') = 0$ for all divisors $1 \neq a' \mid a$, and $1\neq b' \mid b$. 
\end{remark}
\begin{remark}
We would  like to remove the almost linear $k$-invariant assumption.
 However, the group $G= Q(8)$
   acts freely and homologically trivially on $M$ with $H_1(M;\bZ) = \cy{3}$, since $Q(8) \times \cy 3$ acts
   freely on $\bS^3$. This action has non-linear $k$-invariant in our sense. Indeed, by the proof of Proposition \ref{prop:finiteness}, there is a degree 1 map $f\colon M/G \to X$, where $X$ is a Swan complex for $Q(8)$ with non-trivial finiteness obstruction. 
   \end{remark}

\section{Finite quotients of fundamental groups of 3-manifolds}

 In this section we consider closed 3--manifolds with  regular finite coverings 
which are rational homology spheres. The associated finite covering groups act freely on  such
rational homology $3$-spheres, so they afford examples to which our methods will apply. Note that according to \cite[Theorem 2.6]{Cooper:2000} every finite group in fact acts freely on some \emph{hyperbolic} (hence aspherical) closed rational homology sphere.  In such cases the fundamental group determines 
the topology, and we are really just considering finite index   normal subgroups of  certain Poincar\'e duality 
groups with vanishing first Betti number  (for background, see \cite[Chap.~VIII.10]{Brown:1982}).

Recall that for any group $Q$ and integer $n\ge 0$ we define 
the $n$--th term
of its \emph{derived series} as $Q^{(n+1)}= [Q^{(n)}, Q^{(n)}]$, where $Q^{(0)}=Q$. The derived series for a finite group stabilizes at a perfect normal subgroup, but may not terminate for an infinite group with non-zero abelianization. In fact an interesting open question is whether or not the derived series for
the fundamental group $\Gamma$ of a closed orientable hyperbolic $3$--manifold stabilizes 
if $\Gamma /\Gamma^{(n)}$ is finite and non--trivial for all $n>0$. If it does stabilize i.e. for some $i>0$, $\Gamma^{(i)}$ is perfect, then 
$\Gamma /\Gamma^{(i)}$ is a solvable group with periodic cohomology (of period dividing four), as it
acts freely on an integral homology 3--sphere, namely the covering space associated to the normal
finite index subgroup $\Gamma^{(i)}$. 
Independently of the stability question, one can ask (as in \cite{Cavendish:2015}) about
possible restrictions on the finite quotient groups $\Gamma/\Gamma^{(n)}$. 

Let $L$ denote a closed 3--manifold such that for some $n>0$ 
the quotient $\pi_1(L)/\pi_1(L)^{(n)}$ is finite. Let $\Gamma = \pi_1(L)$.
From the extensions
$$1\to \Gamma^{(i)}/\Gamma^{(i+1)}\to \Gamma/\Gamma^{(i+1)}\to \Gamma/\Gamma^{(i)}\to 1$$
for $0\le i\le n-1$, we infer that all the groups $\Gamma/\Gamma^{(i)}$ and
$\Gamma^{(i)}/\Gamma^{(i+1)}$ are finite in that range. Hence
the covering spaces $L_i$ corresponding to the $\Gamma^{(i)}$ are rational homology spheres. The finite groups $\Gamma/\Gamma^{(i)}$
act freely on them, with quotient $L$; note that  $H_1(L_i,\bZ)\cong \Gamma^{(i)}/\Gamma^{(i+1)}$.
Applying Corollary \ref{long-exact-sequence} we obtain

 \begin{proposition}\label{fundamental-groups}
Let $M$ denote a closed 3--manifold with $\Gamma = \pi_1(L)$ such that $\Gamma /\Gamma^{(n)}$ is finite
for some $n>0$. Then for all $0\le i\le n-1$ there are long exact sequences

\[
\dots\to\wH^{i+2}(\Gamma/\Gamma^{(i)},\bZ)\xrightarrow{\cup\,\sigma_i}  \wH^{i-2}(\Gamma/\Gamma^{(i)},\bZ)\to \wH^i(\Gamma/\Gamma^{(i)}, \Gamma^{(i)}/\Gamma^{(i+1)})\to \wH^{i+3}(\Gamma/\Gamma^{(i)},\bZ)
\to\dots
\]
These sequences are determined by elements $\sigma_i\in \wH^{-4}(\Gamma/\Gamma^{(i)},\bZ)$, which are
images of the respective generators in $\wH^0(\Gamma/\Gamma^{(i)},\bZ)\cong \cy{|\Gamma/\Gamma^{(i)}|}$.
\end{proposition}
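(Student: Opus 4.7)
The plan is a direct application of Corollary \ref{long-exact-sequence} to each regular covering $L_i \to L$ in turn. The setup is mostly already in place in the paragraph preceding the statement, so the proof is short.

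First, I would identify the relevant actions and verify the hypotheses. For $1 \le i \le n-1$, the derived subgroup $\Gamma^{(i)} \trianglelefteq \Gamma$ is characteristic, so the covering $L_i \to L$ associated to it is regular with deck group $G_i := \Gamma/\Gamma^{(i)}$, which is finite by the inductive argument recalled above. Hence $G_i$ acts freely and smoothly on the closed $3$-manifold $L_i$, and $\pi_1(L_i) = \Gamma^{(i)}$ gives the identification $H_1(L_i; \bZ) = \Gamma^{(i)}/\Gamma^{(i+1)}$. Finiteness of the latter for $i \le n-1$ forces $H_1(L_i; \bbQ) = 0$, and combined with orientability and Poincar\'e duality this shows that $L_i$ has the rational homology of $S^3$.

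Having checked the hypotheses of Section 2, the remaining step is to quote Corollary \ref{long-exact-sequence} for the free action of $G_i$ on $L_i$. This immediately yields the displayed Tate cohomology long exact sequence with coefficients $H_1(L_i; \bZ) = \Gamma^{(i)}/\Gamma^{(i+1)}$, and the connecting cup product is given by the image $\sigma_i \in \wH^{-4}(G_i, \bZ)$ of the generator $1 \in \wH^0(G_i, \bZ) \cong \bZ/|G_i|$. Proposition \ref{prop:fundclass} further identifies $\sigma_i$ with the geometric fundamental class $c_*[L] \in H_3(BG_i; \bZ)$ of the common quotient manifold $L = L_i/G_i$. There is no substantive obstacle: the statement is essentially a reinterpretation of the main constructions of Section 2 in the derived-series setting, with the only bookkeeping being the identification of each coefficient module $H_1(L_i; \bZ)$ with the successive derived quotient $\Gamma^{(i)}/\Gamma^{(i+1)}$.
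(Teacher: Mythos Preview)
Your proposal is correct and matches the paper's approach exactly: the paper does not even provide a separate proof, but simply writes ``Applying \ref{long-exact-sequence} we obtain'' the proposition after setting up the coverings $L_i$ and noting that $H^2(L_i;\bZ)\cong \Gamma^{(i)}/\Gamma^{(i+1)}$. Your write-up is a faithful expansion of that one-line argument, with the same identifications and the same appeal to Corollary~\ref{long-exact-sequence}.
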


\begin{corollary}
If $\Gamma^{(i)}$ is perfect, then $\sigma_i\in \wH^{-4}(\Gamma/\Gamma^{(i)},\bZ)$ is
an invertible element in the Tate cohomology of $\Gamma/\Gamma^{(i)}$.
\end{corollary}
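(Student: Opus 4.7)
The plan is very short: the hypothesis that $\Gamma^{(i)}$ is perfect forces the corresponding cover $L_i$ to be an integral homology $3$-sphere, and once that is in hand the periodicity claim is an immediate application of Corollary \ref{exponents}.

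First I would unpack the perfection hypothesis. By definition of the derived series, $\Gamma^{(i+1)} = [\Gamma^{(i)}, \Gamma^{(i)}]$, so $\Gamma^{(i)}$ being perfect says exactly that $\Gamma^{(i+1)} = \Gamma^{(i)}$, hence $\Gamma^{(i)}/\Gamma^{(i+1)} = 0$. Using the identification $H^2(L_i;\bZ) \cong \Gamma^{(i)}/\Gamma^{(i+1)}$ recorded just before Proposition \ref{fundamental-groups} (equivalently, $H_1(L_i;\bZ) = \pi_1(L_i)_{ab} = \Gamma^{(i)}/\Gamma^{(i+1)}$), we conclude that $H_1(L_i;\bZ) = 0$, so $L_i$ is an integral homology $3$-sphere.

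Next I would apply Corollary \ref{exponents} to the free action of the finite group $G := \Gamma/\Gamma^{(i)}$ on the integral homology $3$-sphere $L_i$. The class $\sigma_i$ of Proposition \ref{fundamental-groups} is, by construction, the image of the generator $1 \in \wH^0(G;\bZ) \cong \bZ/|G|$ under the connecting map in the long exact sequence of Corollary \ref{long-exact-sequence} for this action, so it coincides with the class $\sigma$ of Corollary \ref{exponents}. Since $H_1(L_i;\bZ) = 0$, the coprimality hypothesis $(|G|, |H_1(L_i;\bZ)|) = 1$ of Corollary \ref{exponents} is vacuously satisfied (equivalently, the extension $Q$ representing $\cE_{L_i}$ has trivial kernel and so is trivially split), which by that corollary means $\sigma_i$ is a periodicity class for the cohomology of $G = \Gamma/\Gamma^{(i)}$.

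I do not expect any genuine obstacle here; the only point worth double-checking is the identification of $\sigma_i$ with the class $\sigma$ produced by the extension \eqref{eq:extclass} for the $G$-action on $L_i$, and this is immediate from the parallel descriptions of both classes as images of the generator of $\wH^0(G;\bZ)$ under the same Tate connecting homomorphism.
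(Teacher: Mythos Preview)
Your proof is correct and matches the paper's intended argument (the paper states the corollary without proof, treating it as immediate). The only minor stylistic difference is that you route through Corollary~\ref{exponents}, whereas one can equally well read the conclusion off directly from the long exact sequence of Proposition~\ref{fundamental-groups}: once $\Gamma^{(i)}/\Gamma^{(i+1)}=0$, every third term vanishes and $\cup\,\sigma_i$ is an isomorphism in all degrees.
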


As would be expected, Proposition \ref{fundamental-groups} can be used to obtain restrictions on the finite groups $\Gamma/\Gamma^{(i)}$.
As an application we take the opportunity to apply our methods to quickly sketch how to
obtain some of the results 
in \cite{Cavendish:2015} and \cite{Reznikov:1997}. 
 A closed hyperbolic 3--manifold $L$  is aspherical, and hence $H_3(L, \bZ) = H_3(\Gamma, \bZ)$ if  $\Gamma = \pi_1(L)$. For any finite quotient $G$ of $\Gamma$, we let $\sigma_G\in \wH^{-4}(G,\bZ)$ be
the element determined by the fundamental class of $L$, as in  Proposition \ref{prop:fundclass}.

\begin{proposition}[{Cavendish \cite{Cavendish:2015}}]
Let $L$ be a closed hyperbolic 3--manifold and $q\colon \Gamma\to G$, a surjective homomorphism
from $\Gamma = \pi_1(L)$ onto a finite group $G$ inducing an isomorphism
$H_1(\Gamma,\bZ)
\cong H_1(G,\bZ)$ and such that $ker(q)\subset \Gamma^{(2)}$.
Then the homomorphism $\phi_G\colon  \wH^2(G,\bZ)\to \wH^{-2}(G,\bZ)$ given
by $x\mapsto \sigma_G\cup x$ is bijective and cup product
defines a non--degenerate pairing
$\wH^2(G,\bZ)\otimes\wH^2(G,\bZ)\to \wH^4(G,\bZ)$.
Moreover, this pairing factors through a cyclic subgroup of 
$\wH^4(G,\bZ)$. 
\end{proposition}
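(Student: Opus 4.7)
My plan is to transport Poincar\'e duality on $L$ to Tate cohomology of $G$ via the classifying map $c\colon L\to BG$, and then invoke Tate duality for the finite group $G$.

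For the first assertion, by Proposition~\ref{prop:fundclass} we have $\sigma=c_*[L]\in H_3(G;\bZ)\cong\wH^{-4}(G;\bZ)$, so cup product with $\sigma$ in degree two equals cap product with $c_*[L]$. The projection formula then factors $\phi$ as
$$\wH^2(G;\bZ)\xrightarrow{\;c^*\;}H^2(L;\bZ)\xrightarrow{\;\cap[L]\;}H_1(L;\bZ)\xrightarrow{\;c_*\;}\wH^{-2}(G;\bZ).$$
The middle map is Poincar\'e duality on $L$, and the right map is the hypothesis. For the left map, the finiteness of $H_1(L;\bZ)$ gives $H_2(L;\bZ)\cong H^1(L;\bZ)=\Hom(H_1(L),\bZ)=0$, and $H_2(G;\bZ)$ is finite so $\Hom(H_2(G),\bZ)=0$; the universal coefficient theorem then identifies $c^*$ on $H^2$ with $\Ext(c_*,\bZ)\colon \Ext(H_1(G),\bZ)\to\Ext(H_1(L),\bZ)$, which is an isomorphism since $c_*$ on $H_1$ is. Hence $\phi$ is an isomorphism.

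For the non-degenerate pairing, Tate duality provides a perfect pairing $\wH^2(G;\bZ)\otimes\wH^{-2}(G;\bZ)\to\wH^0(G;\bZ)=\bZ/|G|$. Precomposing the second argument with the isomorphism $\phi$ yields a non-degenerate pairing $\wH^2\otimes\wH^2\to\bZ/|G|$ sending $(x,y)$ to $(x\cup y)\cup\sigma$, which by associativity of cup product factors as $\wH^2\otimes\wH^2\xrightarrow{\cup}\wH^4\xrightarrow{\cup\sigma}\wH^0$. Non-degeneracy of the composite forces non-degeneracy of the cup product into $\wH^4$.

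The cyclic factoring is the main obstacle, where the hypothesis $\ker q\subseteq\Gamma^{(2)}$ enters. My plan is to show that $\cup\sigma\colon\wH^4(G;\bZ)\to\bZ/|G|$ is injective on the image $I$ of the cup pairing, so that $I$ embeds into a cyclic group and is therefore itself cyclic. The hypothesis implies that $G$ surjects onto the maximal metabelian quotient $\Gamma/\Gamma^{(2)}$ and that the cover $M\to L$ factors through the $\Gamma/\Gamma^{(2)}$-cover. I would use this to identify the transported pairing $\wH^2(G)\otimes\wH^2(G)\to\bZ/|G|$ with the linking form of the rational homology sphere $L$ on $H_1(L;\bZ)$, via the isomorphisms $c^*\colon H^2(G)\cong H^2(L)$ and $c_*\colon H_1(L)\cong H_1(G)$ established above. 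The hard part is to upgrade this identification at the level of $\wH^0$ to the required injectivity of $\cup\sigma$ on $I\subseteq\wH^4$, in effect to argue that under the metabelian hypothesis the image $I$ coincides with the cyclic subgroup of $\wH^4(G;\bZ)$ that is Tate-dual to $\la\sigma\ra\subseteq H_3(G;\bZ)$; analysing the cap pairing $H^2(G)\otimes H_3(G)\to H_1(G)$ on classes supported on the $\Gamma/\Gamma^{(2)}$-cover is where the bulk of the work would lie.
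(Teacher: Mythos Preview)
Your argument for the first assertion is correct and in fact more direct than the paper's. You factor $\phi$ through Poincar\'e duality on $L$ via the projection formula and check each factor is an isomorphism. The paper instead uses its own machinery (the long exact sequence of Corollary~\ref{long-exact-sequence}) together with the five-term sequence of the fibration $\tilde L\to L\to BG$ to show $\phi$ is surjective, and then appeals to equal cardinality of domain and codomain. Your approach is cleaner and self-contained; the paper's approach has the virtue of illustrating the general framework it has set up. Your derivation of non-degeneracy of the cup pairing from Tate duality is essentially the same as the paper's.

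For the second assertion, however, your proposal does not reach the key idea, and the strategy you sketch is not the one that works. You aim to show that $\cup\,\sigma\colon\wH^4(G;\bZ)\to\wH^0(G;\bZ)$ is injective on the image $I$ of the cup pairing, but you give no mechanism for this, and the hypothesis $\ker q\subset\Gamma^{(2)}$ does not enter in any concrete way; indeed, the identification with the linking form you mention is already available from part one without the extra hypothesis. The paper's argument is structurally different: set $J=[G,G]$ and $R=G/[G,G]$. Since $H^2(G;\bZ)\cong G_{ab}=R$, the inflation $\wH^2(R;\bZ)\to\wH^2(G;\bZ)$ is an isomorphism, so the image $I$ lies entirely in the image of inflation $\wH^4(R;\bZ)\to\wH^4(G;\bZ)$. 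The hypothesis $\ker q\subset\Gamma^{(2)}$ is used to show that the intermediate cover $\tilde L/J$ satisfies $H^2(\tilde L/J;\bZ)\cong H^2(J;\bZ)$; applying Corollary~\ref{long-exact-sequence} to the free $R$-action on $\tilde L/J$ yields an exact sequence
\[
0\to \wH^1(R;H^2(J;\bZ))\xrightarrow{\,d\,}\wH^4(R;\bZ)\to\wH^0(R;\bZ),
\]
and $d$ is identified with the differential $d_3$ in the Lyndon--Hochschild--Serre spectral sequence of $1\to J\to G\to R\to 1$. Hence the image of $d$ dies under inflation, and the inflation $\wH^4(R;\bZ)\to\wH^4(G;\bZ)$ factors through $\coker d\subset\wH^0(R;\bZ)\cong\bZ/|R|$, a cyclic group. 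This is the missing step: the role of the metabelian hypothesis is to control $H_1$ of the intermediate cover, and the cyclicity comes from the abelianization $R$, not from injectivity of $\cup\,\sigma$ on $I$.
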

\begin{proof}
First we observe that the condition $ker(q)\subset \Gamma^{(2)}$ implies that $G$ maps onto 
$\Gamma /\Gamma^{(2)}$, hence it is also finite and so the commutator $S =[\Gamma, \Gamma ]$ corresponds
to a covering $\tilde{L}$ of $L$ which is a rational
homology 3-sphere with a free action of the finite group $Q = H_1(G,\bZ)$. Now any group $G$ satisfying the
hypotheses necessarily maps onto $Q$, inducing an isomorphism $\wH^2(Q,\bZ)\cong \wH^2(G,\bZ)$
and so by naturality it will suffice to prove
the statements for $Q$. 

Consider the group extension $1\to S \to \Gamma \to Q\to 1$. Due to the vanishing of 
$H^1(S,\bZ)$ and $H^1(Q,\bZ)$ and
the fact that $H^3(\Gamma,\bZ)$ is torsion--free, the associated spectral sequence of the extension gives rise
to the following exact sequence in low degrees:
$$0\to H^2(Q,\bZ)\to H^2(\Gamma,\bZ)\to H^2(S,\bZ)^Q\to H^3(Q,\bZ)\to 0.$$
However, as $Q$ is the (finite) abelianization of $\Gamma$, we have $H^2(Q,\bZ)\cong H^2(\Gamma,\bZ)$
and so we infer that $\wH^0(Q, H^2(S,\bZ))\cong H^3(Q,\bZ)$ (it factors through Tate cohomology). 
On the other hand, from \eqref{long-exact-sequence} applied to the $Q$--action on $\tilde{L}$,
we have an exact sequence
$$0\to \text{coker}(\phi_Q)\to \wH^0(Q, H_1(S,\bZ))\to H^3(Q,\bZ)\to 0.$$
Noting the identification of $Q$--modules $H^2(S,\bZ)\cong H_1(S,\bZ)$, and using the isomorphism above,
we infer that $\phi_Q$ is surjective. As the domain and codomain of $\phi_Q$ have the same
number of elements this implies that it is an isomorphism.
Now given $y\in \wH^2(Q,\bZ)$ we can choose $z\in \wH^2(Q,\bZ)$ such
that the Tate dual $y^*=\sigma\cup z$. 
Then $0\ne z\cup y$ because $0\ne y^*\cup y=\sigma \cup z\cup y$,
showing that the pairing is nondegenerate.

Now let $J=[G,G]$, 
$\Gamma$ maps onto $G$, so $S$ maps onto $J$ and $H_1(S,\bZ)$ maps onto $H_1(J,\bZ)$.
The condition $\ker(q)\subset \Gamma^{(2)}=S^{(1)}$ means that the kernel of
$q|_S\colon  S\to J$ is contained in $S^{(1)}$, and so the abelianized map is an isomorphism
$H_1(S,\bZ)\cong H_1(J,\bZ)$ or equivalently $H^2(S,\bZ)\cong H^2(J,\bZ)$.
Once again applying \eqref{long-exact-sequence} to the $Q$--action on $\tilde{L}$, we obtain the exact sequence
$$0 \to \wH^1(Q,H_1(S,\bZ)) \stackrel{d}{\to} \wH^4(Q,\bZ) \to \wH^0(Q,\bZ).$$
As has been observed in Remark \ref{differentials}, we can identify $d$ with the differential 
$d_3\colon  E_3^{1,2} \to E_3^{4,0}$ arising from the Serre spectral sequence for the fibration
$\tilde{L}\to L\to BQ$, which in this case is simply the Lyndon--Hochschild--Serre spectral sequence for the extension
$1\to S\to\Gamma\to Q\to 1$. Using the above identifications, this agrees with the corresponding
differential arising from the Lyndon--Hochschild--Serre spectral sequence for the group extension 
$1\to J\to G\to Q\to 1$.
Therefore the image of $d$
goes to zero under the inflation map $H^4(Q,\bZ)\to H^4(G,\bZ)$, and so it factors 
through $\coker~d \subset \wH^0(Q,\bZ)\cong \cy{|Q|}$, a cyclic group. 
Using the isomorphism $\wH^2(Q,\bZ)\cong \wH^2(G,\bZ)$ and naturality
of the cup product completes the proof.
\end{proof} 

\begin{corollary}[{Reznikov \cite{Reznikov:1997}}]
Let $L$ denote a closed three--manifold such that $G=\pi_1(L)/\pi_1(L)^{(n)}$ is a finite 2--group,
and $H_1(L,\bZ)\cong \cy{2} \times \cy{2}$. Then $G$ is a generalized quaternion group.
\end{corollary}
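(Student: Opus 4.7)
The plan is to invoke the preceding Proposition with $\Gamma = \pi_1(L)$ and the quotient $q\colon \Gamma \to G = \Gamma/\Gamma^{(n)}$. Since $\Gamma^{(n)} \subseteq \Gamma^{(1)}$, the map $q$ induces an isomorphism on first homology, so $G^{\mathrm{ab}} = H_1(L,\bZ) = \bZ/2 \times \bZ/2$. The case $n = 1$ forces $G = (\bZ/2)^2$, which is not generalized quaternion, so the assertion implicitly requires $n \geq 2$; then $\ker(q) \subseteq \Gamma^{(2)}$, and the Proposition yields a nondegenerate cup-product pairing
\[
\wH^2(G,\bZ)\otimes \wH^2(G,\bZ)\to \wH^4(G,\bZ)
\]
whose image lies in a cyclic subgroup $C$ of $\wH^4(G,\bZ)$. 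Because $H^2(G,\bZ)=\Hom(G^{\mathrm{ab}},\bQ/\bZ)=(\bZ/2)^2$ is $2$-torsion, the image in fact lies in $C[2]\cong \bZ/2$, and is exactly $\bZ/2$ by nondegeneracy.

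My goal is to show that $G$ has no elementary abelian subgroup of rank two. Combined with the observation that $G$ is a non-cyclic finite $2$-group (since $G^{\mathrm{ab}}$ is non-cyclic), the classical classification then forces $G$ to be generalized quaternion. I would establish the rank constraint via a mod-$2$ argument. Choose $x, y \in H^1(G,\bbF_2)$ dual to generators of $G^{\mathrm{ab}}$; the identity $\beta(u) = u^2$ for $u$ in degree one identifies the image of the reduction $H^2(G,\bZ)\to H^2(G,\bbF_2)$ with the span of $x^2$ and $y^2$. Consequently the mod-$2$ reduction of the pairing is represented by the three classes $\{x^4,\, y^4,\, (xy)^2\}$ in $H^4(G,\bbF_2)$, and the cyclic factorization constrains them to span at most a one-dimensional subspace.

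Suppose for contradiction that $A \cong \bZ/2\times\bZ/2$ is a subgroup of $G$. If the restriction $\Res^G_A \colon H^1(G,\bbF_2)\to H^1(A,\bbF_2)$ were surjective, then $x|_A$ and $y|_A$ would form a basis $u, v$ of $H^1(A,\bbF_2)$, so the restrictions of $x^4$, $y^4$, $(xy)^2$ would be the three linearly independent classes $u^4$, $v^4$, $u^2 v^2$ in $H^4(A,\bbF_2)$, contradicting the span-one bound. Hence the image of $A\to G^{\mathrm{ab}}$ has rank at most one, equivalently $A\cap [G,G]\neq 1$.

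The main obstacle is then to rule out this remaining configuration, and I would do so by a direct examination of the mod-$2$ cohomology ring. Among finite $2$-groups with $G^{\mathrm{ab}}=(\bZ/2)^2$, only the generalized quaternion groups $Q_{2^n}$ ($n\geq 3$) have periodic mod-$2$ cohomology with $\dim_{\bbF_2} H^4(G,\bbF_2)=1$; the remaining possibilities—$(\bZ/2)^2$ itself and the maximal-class families $D_{2^n}$ and $SD_{2^n}$ ($n\geq 3$), together with any further $2$-generator $2$-groups sharing this abelianization—have $\dim_{\bbF_2} H^4(G,\bbF_2)\geq 2$, and the known ring presentations give that $x^4$ and $y^4$ are linearly independent in $H^4(G,\bbF_2)$ (with $(xy)^2$ either zero, as in the dihedral case, or lying in their span, as in the semidihedral case). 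The three classes then span a subspace of dimension at least two, contradicting the cyclic factorization, so $G$ must be generalized quaternion.
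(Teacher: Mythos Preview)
Your reduction to the Proposition and the observation that the cyclic factorization forces the span of $\{x^4,y^4,(xy)^2\}$ in $H^4(G,\bbF_2)$ to be at most one--dimensional are fine, and this does eliminate the dihedral groups. The gap is in the semidihedral case: your assertion that ``the known ring presentations give that $x^4$ and $y^4$ are linearly independent'' is false there. In $H^*(SD_{2^n};\bbF_2)$ one of the two degree--one generators, say $\alpha$, satisfies $\alpha^3=0$ (and $\alpha\beta=0$), so $\alpha^4=0$; moreover $(\alpha+\beta)^4=\alpha^4+\beta^4=\beta^4$ since the cross terms vanish. Hence for \emph{every} basis $x,y$ of $H^1(SD_{2^n};\bbF_2)$ the set $\{x^4,y^4,(xy)^2\}$ spans only the line through $\beta^4$, and your span--at--most--one criterion yields no contradiction.

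This is exactly why the paper's proof uses \emph{both} conclusions of the Proposition, not just the cyclic--image part: nondegeneracy of the integral pairing on $\wH^2(G;\bZ)\cong(\bZ/2)^2$ is what kills the semidihedral groups (the element corresponding to $\alpha^2$ lies in the radical, since $\alpha^4=\alpha^2\beta^2=0$), while the cyclic--image condition is what kills the dihedral groups. By passing entirely to the mod~$2$ span criterion you have thrown away the nondegeneracy information, and the argument cannot be completed along the lines you propose. A secondary issue: your phrase ``together with any further $2$--generator $2$--groups sharing this abelianization'' is a loose end---there are in fact no such further groups, but that requires Taussky's theorem (equivalently the maximal--class classification cited from Gorenstein in the paper); without it your case analysis is not exhaustive.
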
 
\begin{proof}
Clearly $G/[G,G]\cong H_1(L, \bZ)\cong \cy{2}\times \cy{2}$ 
and so $G$ is a 2-group of maximal
class and thus must be either (generalized) quaternion, dihedral or semi-dihedral (see \cite{Gorenstein:1980}, Section 5.4). The condition
that the cup product pairing be non--singular eliminates the semi--dihedral groups 
(see \cite{Evens:1985}), 
and the fact that
the image has rank one eliminates the dihedral groups (see \cite{Handel:1993}). 
\end{proof}

\providecommand{\bysame}{\leavevmode\hbox to3em{\hrulefill}\thinspace}
\providecommand{\MR}{\relax\ifhmode\unskip\space\fi MR }
\providecommand{\MRhref}[2]{%
  \href{http://www.ams.org/mathscinet-getitem?mr=#1}{#2}
}
\providecommand{\href}[2]{#2}

\end{document}